\definecolor{darkgreen}{rgb}{0.00,0.50,0.10}
\definecolor{lightgreen}{rgb}{0.20,0.70,0.30}
\let\subset\subseteq 
\let\eps\varepsilon 
\let\rho\varrho
\let\phi\varphi 
\def\dcup{\dot\cup} 
\def\sh#1{\smash{\hat{#1}}} 
\def\sbar#1{\smash{\bar{#1}}}
\let\l\ell
\def\subsc#1{\textsc{#1}}
\def\itm#1{\rm ({#1})}
\def\itmit#1{\itm{\it #1\,}}
\def\irom{\itmit{\roman{*}}}
\def\iabc{\itmit{\alph{*}}}
\newtheorem{theorem}{Theorem}
\newtheorem{lemma}[theorem] {Lemma}
\newtheorem{proposition}[theorem] {Proposition}
\newtheorem{definition}[theorem] {Definition} 
\theoremstyle{remark}
\newcommand{\Nat}{\ensuremath{\mathbb{N}}}
\newcommand{\Exp}{\mathbb{E}}
\newcommand{\Prob}{\mathbb{P}}
\newcommand{\By}[2]{\overset{\mbox{\tiny{#1}}}{#2}}
\newcommand{\ByRef}[2]{   \By{\eqref{#1}}{#2} }
\newcommand{\leBy}[1]{    \By{#1}{\le} }
\newcommand{\geBy}[1]{    \By{#1}{\ge} }
\newcommand{\eqByRef}[1]{ \ByRef{#1}{=} }
\newcommand{\lByRef}[1]{  \ByRef{#1}{<} }
\newcommand{\geByRef}[1]{ \ByRef{#1}{\ge} }
\newcommand{\bw}{\operatorname{bw}} 
\newcommand{\parti}[2][k]{(#2_i)_{i\in[#1]}}
\title{
Embedding into bipartite graphs
%
%
}
\author{Julia B\"ottcher} 
\address{Zentrum Mathematik, Technische Universit\"at M\"unchen,
  Boltzmannstra\ss{}e~3, D-85747 Garching bei M\"unchen, Germany} 
\email{boettche@ma.tum.de}
\author{Peter Heinig}
\address{Zentrum Mathematik, Technische Universit\"at M\"unchen, 
    Boltzmannstra\ss{}e~3, D-85747 Garching bei M\"unchen, Germany} 
\email{heinig@ma.tum.de}
\author{Anusch Taraz } 
\address{Zentrum Mathematik, Technische Universit\"at M\"unchen, 
  Boltzmannstra\ss{}e~3, D-85747 Garching bei M\"unchen, Germany} 
\email{taraz@ma.tum.de}
\thanks{ The first and third author were partially supported by DFG grant
  TA~309/2-1. The second author was supported by TopMath, an Elite Graduate 
  Program of the ENB}
\begin{document}

\begin{abstract}
The conjecture of Bollob\'as and Koml\'os, recently proved by B\"ott\-cher,
Schacht, and Taraz [Math.\ Ann.\ 343(1), 175--205, 2009], implies that for 
any $\gamma>0$, every balanced bipartite graph on $2n$ 
vertices with bounded degree and sublinear bandwidth appears as a subgraph 
of any $2n$-vertex graph $G$ with minimum degree $(1+\gamma)n$, provided that 
$n$ is sufficiently large. We show that this threshold can be cut in half to an 
essentially best-possible minimum degree of $(\frac12+\gamma)n$ when we have the
additional structural information of the host graph $G$ being balanced bipartite. 

This complements results of Zhao~[to appear in SIAM J.\ Discrete Math.], as
well as Hladk\'y and Schacht~[to appear in SIAM J.\ Discrete Math.], who
determined a corresponding minimum degree threshold for $K_{r,s}$-factors,
with $r$ and $s$ fixed. 
Moreover, it implies that the set of Hamilton cycles of~$G$ is a generating
system for its cycle space.

\medskip
\noindent
{\it Keywords:} 
Graph theory (05Cxx),
Extremal combinatorics (05Dxx),
Graph embedding
\end{abstract}

\maketitle


\section{Introduction}

The Bollob\'as--Koml\'os conjecture, recently proved in~\cite{BST09}, provides
a sufficient and essentially best possible minimum degree condition for
the containment of $r$-chromatic spanning graphs~$H$ of bounded maximum
degree and small bandwidth.
Here, a graph is said to have bandwidth at most $b$, if there
exists a labelling of the vertices by numbers $1,\dots,n$, such that for every
edge $\{i,j\}$ of the graph we have $|i-j| \le b$.

\begin{theorem}[B\"ottcher, Schacht, Taraz~\cite{BST09}]
\label{thm:bk}
  For all $r,\Delta\in\mathbb{N}$ and $\gamma>0$, there exist constants $\beta>0$
  and $n_0\in\mathbb{N}$ such that for every $n\geq n_0$ the following holds.
  If~$H$ is an $r$-chromatic graph on~$n$ vertices with $\Delta(H) \leq \Delta$
  and bandwidth at most $\beta n$ and if~$G$ is a graph on~$n$ vertices with
  minimum degree $\delta(G) \geq (\frac{r-1}{r}+\gamma)n$, then~$G$ contains a
  copy of~$H$.
\qed
\end{theorem}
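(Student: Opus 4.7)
The plan is to combine Szemer\'edi's regularity lemma with the blow-up lemma in the by-now standard regularity--blow-up framework, using the bandwidth condition on~$H$ as the device for matching pieces of~$H$ to a spanning clique-structure in the reduced graph of~$G$.

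First, I would apply the regularity lemma to~$G$ with appropriate parameters (depending on $\gamma$, $\Delta$, and~$r$) to obtain an $\eps$-regular equipartition $V_0,V_1,\dots,V_k$ and the associated reduced graph~$R$ on vertex set $[k]$, whose edges correspond to $\eps$-regular pairs of density at least~$d$. A standard mean argument shows that~$R$ inherits essentially the same minimum-degree ratio as~$G$, namely $\delta(R)\geq(\tfrac{r-1}{r}+\tfrac{\gamma}{2})k$. Applying the Hajnal--Szemer\'edi theorem to~$R$ then yields a $K_r$-factor, i.e.\ a partition of $[k]$ into cliques of order~$r$. This alone is not enough: to embed a \emph{connected} spanning~$H$ we need neighbouring cliques in the factor to be ``linked''. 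The right object, extracted in~\cite{BST09}, is what one may call a spanning ladder or backbone of~$K_r$'s --- a sequence of $K_r$-cliques in $R$ such that consecutive ones are joined by a $K_{r,r}$ (or a similar connecting substructure), covering all of $V(R)$. Producing this backbone from the minimum-degree assumption on~$R$ is the first real technical step; one argues it via repeated application of the defect Hajnal--Szemer\'edi theorem to ``connector'' configurations.

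Second, I would exploit the bandwidth hypothesis on~$H$. Fix a bandwidth labelling $1,\dots,n$ of~$H$ with $|i-j|\le\beta n$ whenever $ij\in E(H)$. Cut this ordering into consecutive intervals~$I_1,I_2,\dots,I_t$ of sizes prescribed by the backbone in~$R$ (each~$I_j$ corresponding to one~$K_r$-clique, with slight balancing across the $r$ colour classes). Since $\beta\ll 1/k$, every edge of~$H$ lies within a single $I_j$ or between two consecutive ones~$I_j,I_{j+1}$, and in both cases the relevant clusters form a clique in~$R$. Using that~$H$ is $r$-chromatic with maximum degree $\Delta$, a proper $r$-colouring of each $I_j$ (inherited from a global one) assigns each vertex of~$H$ to a specific cluster~$V_\ell$.

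Third, I would invoke the blow-up lemma of Koml\'os, S\'ark\"ozy, and Szemer\'edi to embed each block of~$H$ into the corresponding blown-up clique of~$G$. The main care needed here is at the interface between $I_j$ and~$I_{j+1}$: a bounded number of vertices in $I_{j+1}$ have neighbours already embedded from~$I_j$, and one must reserve this small ``boundary'' set, handle it via the ``image restrictions'' feature of the blow-up lemma, and ensure the residual degrees still meet the super-regular conditions. Before applying the blow-up lemma one also slightly rebalances the cluster sizes (using the small set~$V_0$ and standard cleaning techniques) so that the number of vertices of~$H$ assigned to each cluster is exactly the cluster's size. The anticipated main obstacle is the combination of \emph{connecting} the $K_r$-cliques in~$R$ into a spanning backbone and then threading the bandwidth partition of~$H$ through this backbone while maintaining colour-class balance; once the backbone and the corresponding assignment are in place, the blow-up lemma delivers the embedding essentially for free.
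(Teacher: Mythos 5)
The paper does not actually prove Theorem~\ref{thm:bk}: it is quoted from~\cite{BST09} and used as a black box, so there is no in-paper proof to compare your attempt against. That said, your outline is the standard regularity--blow-up strategy that \cite{BST09} follows, and it also mirrors the architecture of this paper's own proof of Theorem~\ref{thm:bipbw}: a ``Lemma for $G$'' producing a regular partition whose reduced graph carries a spanning backbone, a ``Lemma for $H$'' cutting $H$ along its bandwidth ordering and assigning the pieces to that backbone, and a general embedding lemma performing the final embedding. Two of your steps are thinner than they should be. First, the spanning backbone of linked $K_r$'s in the reduced graph $R$ is not obtained by ``repeated application of the defect Hajnal--Szemer\'edi theorem to connector configurations''; in \cite{BST09} it comes from the Koml\'os--S\'ark\"ozy--Szemer\'edi theorem on the P\'osa--Seymour conjecture, which under $\delta(R)\ge(\frac{r-1}{r}+\frac{\gamma}{2})k$ yields the $(r-1)$-st power of a Hamilton cycle in $R$; this simultaneously supplies the $K_r$-factor and the connections between consecutive cliques, and it is a substantial theorem in its own right (the analogue in the present paper is the much easier Theorem~\ref{thm:moon}, giving a Hamilton cycle in the bipartite reduced graph). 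Second, ``slight balancing across the $r$ colour classes'' hides a genuine difficulty: a proper $r$-colouring restricted to an interval $I_j$ of the bandwidth ordering can be arbitrarily unbalanced, so one cannot simply assign colour classes to clusters and expect the counts to match the cluster sizes; an explicit balancing argument is required (the bipartite analogue here is Lemma~\ref{lem:num}, proved by a probabilistic grouping of the pieces, combined with the cluster-size adjustment of Lemma~\ref{lem:adjust}). With those two ingredients made precise, your plan is the correct one.
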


This theorem in particular implies that for any
$\gamma>0$, every {bipartite} graph~$H$ on $2n$ vertices with
bounded degree and sublinear bandwidth appears as a subgraph of any $2n$-vertex graph~$G$ with
minimum degree $(1+\gamma)n$, provided that~$n$ is sufficiently large. 
This bound is essentially best possible for an almost trivial reason: there are
graphs~$G$ with minimum degree just slightly below $n$ that are not connected. 
Such~$G$ clearly do not contain a connected~$H$ as a subgraph. These graphs are
simply too different in structure from~$H$.


One may ask, however, whether it is possible to lower the minimum degree
threshold in Theorem~\ref{thm:bk} for graphs~$G$ and~$H$ that are
structurally more similar and, in particular, have the same chromatic number. In
this paper we will pursue this question for the case of balanced bipartite
graphs, i.e., bipartite graphs on $2n$ vertices with~$n$ vertices in each colour class.


Dirac's theorem~\cite{Dir} implies that a $2n$-vertex graph~$G$ with minimum
degree at least~$n$ contains a Hamilton cycle. If $G$ is balanced bipartite,
it follows from a theorem of Moon 
and Moser~\cite{MooMos}
%
%
that this minimum degree threshold can be cut almost in half.

\begin{theorem}
\label{thm:moon}
  Let $G$ be a balanced bipartite graph on $2n$ vertices. If $\delta(G)\geq
  \frac{n}{2}+1$, then $G$ contains a Hamilton cycle.
\end{theorem}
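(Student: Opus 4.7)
The plan is to derive this from a bipartite analogue of the Bondy--Chv\'atal closure. Concretely, I would first establish the following lemma: if $G$ is a balanced bipartite graph on $2n$ vertices with bipartition $(A,B)$, and if $u \in A$ and $v \in B$ are non-adjacent with $\deg_G(u)+\deg_G(v)\geq n+1$, then $G$ is Hamiltonian if and only if $G+uv$ is.

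The non-trivial direction goes as follows. Suppose $G+uv$ contains a Hamilton cycle; if it avoids $uv$ we are done, so assume it uses $uv$. Deleting this edge yields a Hamilton path $P = x_1 x_2 \cdots x_{2n}$ with $x_1 = u$ and $x_{2n} = v$, and because $G$ is balanced bipartite, $x_i \in A$ iff $i$ is odd. I would then carry out a P\'osa-style rotation: for every neighbor $x_i$ of $u$ on $P$ (necessarily $i$ is even), declare $x_{i-1}$ to be a candidate. If some candidate $x_{i-1}$ is adjacent to $v$, then
\[
x_1, x_2, \ldots, x_{i-1},\, v,\, x_{2n-1}, x_{2n-2}, \ldots, x_i, x_1
\]
traces a Hamilton cycle in $G$. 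Now the candidate set has exactly $\deg_G(u)$ elements, lies entirely in $A$, and misses $v$; since $N_G(v)\subseteq A$ has $\deg_G(v)$ elements and $|A|=n$, the hypothesis $\deg_G(u)+\deg_G(v)\geq n+1$ forces these two subsets of $A$ to share at least one vertex, producing the desired cycle.

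With the lemma in hand, the theorem is essentially immediate. Under $\delta(G)\geq\tfrac{n}{2}+1$, every non-adjacent pair $(u,v)$ across the bipartition satisfies $\deg(u)+\deg(v)\geq n+2$, so the lemma permits inserting the missing edge without changing Hamiltonicity. Iterating until no cross non-edges remain, we arrive at $K_{n,n}$, which is trivially Hamiltonian; reading the closure steps in reverse then transfers Hamiltonicity back to $G$.

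The principal obstacle is the bookkeeping for the rotation: one must check that the candidate set has the advertised size, lies in $A$, and is disjoint from $v$. All three facts rely crucially on the bipartite structure, which is exactly what allows the threshold $n/2+1$ to be attainable rather than the generic $n$ coming from Dirac's theorem.
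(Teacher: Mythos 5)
Your argument is correct. The paper itself does not prove this statement at all --- it is quoted as a consequence of the Moon--Moser theorem and dispatched with a citation --- so there is no in-paper proof to match; what you have written is, in effect, a self-contained proof of the underlying Ore-type result of Moon and Moser (every non-adjacent cross pair with degree sum at least $n+1$ forces Hamiltonicity), packaged as a bipartite Bondy--Chv\'atal closure. The key steps all check out: deleting the edge $uv$ from a Hamilton cycle of $G+uv$ yields a spanning path $x_1\cdots x_{2n}$ with $x_1=u$, $x_{2n}=v$ alternating between the classes; the candidate set $\{x_{i-1}: x_i\in N_G(u)\}$ is an injective image of $N_G(u)$, lies in $A$ because the relevant indices are odd, and cannot collide with $N_G(v)$ only at $x_1=u$ since $uv\notin E(G)$ excludes that; and $\deg(u)+\deg(v)\ge n+1$ with both sets inside the $n$-element class $A$ forces the intersection that closes the cycle. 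The closure iteration is also sound, since adding edges only increases degrees, and $\delta(G)\ge n/2+1$ rules out the degenerate case $n=1$, so the terminal graph $K_{n,n}$ is Hamiltonian. One cosmetic remark: this is the classical Ore-style ``crossing'' argument rather than a P\'osa rotation (no endpoint is ever rotated), but that does not affect correctness. Compared with the paper's citation, your route has the advantage of being elementary and self-contained, at the cost of proving slightly more (the Ore-type statement) than the minimum-degree corollary actually used.
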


We prove that slightly increasing this minimum degree bound suffices to obtain
all balanced bipartite graphs with bounded maximum degree and sublinear
bandwidth as subgraphs, and thereby establishing the following bipartite
analogue of Theorem~\ref{thm:bk}, halving the minimum degree threshold in that
result.

\begin{theorem}\label{thm:bipbw}
  For all $\gamma$ and $\Delta$ there is a positive constant $\beta$ and an
  integer $n_0$ such that for all $n\ge n_0$ the following holds. Let $G$ and
  $H$ be balanced bipartite graphs on $2n$ vertices such that $G$ has minimum degree
  $\delta(G)\ge(\frac12+\gamma)n$ and $H$ has maximum degree $\Delta$ and
  bandwidth at most $\beta n$. Then $G$ contains a copy of $H$.
\end{theorem}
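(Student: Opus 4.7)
The plan is to adapt the regularity-blow-up strategy used to prove Theorem~\ref{thm:bk} in \cite{BST09}, but to replace the key structural step on the reduced graph by one that exploits the additional bipartite symmetry of $G$. First, I would apply Szemer\'edi's regularity lemma to $G$ respecting its bipartition $A\dcup B$, obtaining for some fixed $k$ equitable partitions $A=V_0\dcup V_1\dcup\cdots\dcup V_k$ and $B=W_0\dcup W_1\dcup\cdots\dcup W_k$ with tiny exceptional sets $V_0,W_0$, and a bipartite reduced graph $R$ on $\{V_i\}_{i\in[k]}\cup\{W_j\}_{j\in[k]}$ whose edges are the $\eps$-regular pairs of density at least $d$. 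Standard arguments show that $R$ inherits a minimum degree $\delta(R)\ge(\tfrac12+\tfrac{\gamma}{2})k$, so Moon--Moser (Theorem~\ref{thm:moon}) provides a Hamilton cycle in $R$. Relabelling, I may write this cycle as $V_1W_1V_2W_2\cdots V_kW_kV_1$, and after moving a small fraction of atypical vertices into $V_0,W_0$, all $2k$ consecutive pairs along this cycle become super-regular.

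Next I would partition $H$ along this skeleton. Using the sublinear bandwidth assumption, together with the fact that $H$ is $2$-chromatic, I would assign every vertex of $H$ a label in $[2k]$ so that (i) adjacent vertices of $H$ receive labels differing by at most one modulo $2k$, (ii) vertices with odd labels lie in one colour class $X$ of $H$ and vertices with even labels in the other colour class $Y$, and (iii) the number of vertices receiving a given label matches the size of the corresponding cluster along the cycle. The obvious candidate for this labelling is obtained by cutting the bandwidth ordering of $V(H)$ into $\approx 2k$ consecutive intervals of length $n/k$ and then refining each interval into its intersections with $X$ and $Y$, but some rearranging is needed to make (ii) and (iii) hold simultaneously. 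This is essentially the content of the ``lemma for $H$'' from \cite{BST09}, specialised to the bipartite case. Once the labelled partition of $H$ and the super-regular Hamilton cycle in $R$ are available, the exceptional vertices of $V_0\cup W_0$ can be absorbed by a greedy partial embedding, and the remaining vertices of $H$ are embedded into their designated clusters by a single application of the blow-up lemma along the cycle, since all edges of $H$ now lie either inside a cluster pair or across a super-regular pair $V_iW_i$ or $W_iV_{i+1}$.

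The main obstacle is setting up the partition of $H$ in Step~two so that the sizes match the cluster sizes along the cycle \emph{while respecting both bipartitions}. The bandwidth labelling of $H$ does not automatically alternate between $X$ and $Y$, yet the Hamilton cycle in $R$ alternates strictly between $A$-clusters and $B$-clusters; moreover, after shifting vertices to $V_0$ and $W_0$ the cluster sizes need no longer divide $n$ evenly. Handling both issues at once requires a careful balancing argument: one must be able to shuffle a bounded number of $H$-vertices between neighbouring labels without violating the bandwidth-based adjacency condition (i), and then absorb any remaining imbalance using the flexibility provided by $\Delta(H)\le\Delta$ and by the $\gamma n$ of slack in the minimum degree of $G$. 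Once this bipartite analogue of the BST partitioning lemma is in place, the rest of the argument is a straightforward combination of the regularity and blow-up lemmas.
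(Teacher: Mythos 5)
Your overall architecture coincides with the paper's: apply the regularity lemma so that the partition refines $A\dcup B$, observe that the bipartite reduced graph inherits $\delta(R)\ge(\tfrac12+\gamma-o(1))k$, invoke Moon--Moser to find a Hamilton cycle in $R$, make the partition super-regular along (part of) that cycle, and finish with a blow-up-type embedding after absorbing the exceptional vertices. That much is right, and you also correctly identify the crux: producing a partition of $H$ whose class sizes match the clusters along the cycle \emph{and} whose classes respect the bipartition $X\dcup Y$.

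The gap is that you name this obstacle but your proposed resolution would fail. You suggest cutting the bandwidth ordering into consecutive intervals, assigning them in order around the cycle, and then ``shuffling a bounded number of $H$-vertices between neighbouring labels.'' The difficulty is that a single interval of length about $n/k$ in the bandwidth ordering can be drastically unbalanced between $X$ and $Y$ -- for instance if $H$ is a union of stars $K_{1,\Delta}$ whose centres lie in $X$ in the first half of the ordering and in $Y$ in the second half -- so the discrepancy between $|W\cap X|$ and $|W\cap Y|$ within an interval $W$ can be $\Theta(n)$ in total, not bounded. No local reshuffling of $O(1)$ (or even $O(\beta n)$) vertices between adjacent labels can repair this, and moving a vertex to a neighbouring label does not change which colour class it belongs to in any case. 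The paper resolves this with two ideas that are absent from your proposal. First, it cuts $H$ into $\ell\gg k$ small pieces and assigns piece $j$ to cycle position $i$ \emph{at random} with probability $n_i/n$; Chernoff and Hoeffding bounds (Lemma~\ref{lem:num}) show that with positive probability every position receives a package of pieces that is balanced between $X$ and $Y$ up to $\xi n$ and has roughly the prescribed total size -- this exploits the \emph{global} balance $|X|=|Y|=n$ to cancel the local imbalances. Second, because this assignment is not order-preserving, consecutive pieces may land on far-apart edges of the cycle, so the map is not yet a homomorphism; the paper reserves $2k$ short ``linking'' blocks of $\beta n$ vertices at the start of each piece and routes them around the cycle from the previous piece's position to the current one (the case analysis in Section~\ref{subsec:H}). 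A final size-adjustment step (Lemma~\ref{lem:adjust}) then makes the cluster sizes of $G$ match $|f^{-1}(A_i)|$ and $|f^{-1}(B_i)|$ exactly. Without the random balancing and the linking construction (or substitutes for them), the embedding cannot be completed.
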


Results of a similar nature have recently been established by
Zhao~\cite{Zhao_bip}, and by Hladk\'y and Schacht~\cite{HlaSch} who considered
the special case of coverings of~$G$ with disjoint copies of complete
bipartite graphs. Moreover, as a first step towards
Theorem~\ref{thm:bipbw}, in~\cite{heinigbsc} this result was proved for a
special balanced bipartite connected graph (the so-called M\"obius ladder).

We remark that the bandwidth condition in Theorem~\ref{thm:bipbw} cannot be
omitted. Indeed, Abbasi~\cite{Abbasi} proved that the assertion of
Theorem~\ref{thm:bk} gets false if $\beta>4\gamma$. The graph~$H$ he constructs
for this purpose is a balanced bipartite graph and it is not difficult to
see that  Abbasi's host graph contains a bipartite subgraph meeting our
conditions but not containing~$H$. However, the bound on
$\beta$ coming from our proof is very small, having a tower-type dependence
on~$1/\gamma$.

%

The proof of Theorem~\ref{thm:bipbw} is given in Section~\ref{sec:proof}.
It is based on Szemer\'edi's regularity lemma which we introduce in
the following section. In Sections~\ref{sec:G} and~\ref{sec:H} we
provide the proofs of the remaining lemmas that are used in the proof of
Theorem~\ref{thm:bipbw}.


\section{The regularity method}
\label{sec:reg}

In this section we formulate a version of Szemer\'edi's regularity
lemma~\cite{Szemeredi76} that is convenient for our application
(Lemma~\ref{lem:reg}), introduce all necessary definitions, and formulate an
embedding lemma for spanning subgraphs (Lemma~\ref{lem:gel}).

%
%
%

The regularity lemma relies on the concept of a regular pair. To define this, let
$G=(V,E)$ be a graph and $0\le \eps,d\le 1$. For disjoint nonempty vertex sets
$U,W\subset V$ the \emph{density} $d(U,W)$ of the pair $(U,W)$ is the number of
edges that run between $U$ and $W$ divided by $|U||W|$. A pair $(U,W)$ with
density at least $d$ is \emph{$(\eps,d)$-regular} if $|d(U',W')-d(U,W)|\le\eps$
for all $U'\subset U$ and $W'\subset W$ with $|U'|\ge\eps|U|$ and
$|W'|\ge\eps|W|$. The following useful property
of regular pairs follows immediately from the definition.

\begin{proposition}\label{lem:typical}
  Let $G=(A,B)$ be an $(\eps, d)$-regular pair. Let $B'$ be a subset of $B$ with
  $|B'|\ge\eps|B|$. Then there are at most $\eps|A|$ vertices in $A$ with
  less than $(d-\eps)|B'|$ neighbours in $B'$.
\qed
\end{proposition}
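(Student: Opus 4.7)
The plan is a direct application of the defining property of regularity via contradiction. I let $A'\subset A$ denote the set of ``bad'' vertices, namely those $a\in A$ with strictly fewer than $(d-\eps)|B'|$ neighbours in $B'$, and suppose for contradiction that $|A'|>\eps|A|$. Together with the hypothesis $|B'|\ge\eps|B|$, this makes the pair $(A',B')$ eligible to witness a density deviation in the definition of $(\eps,d)$-regularity.

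Next I estimate the edge count $e(A',B')$. By the defining property of $A'$, each vertex of $A'$ contributes strictly fewer than $(d-\eps)|B'|$ edges to $B'$, so $e(A',B')<|A'|(d-\eps)|B'|$, and therefore $d(A',B')<d-\eps$. On the other hand, the definition of an $(\eps,d)$-regular pair stipulates $d(A,B)\ge d$, which gives $d(A,B)-d(A',B')>\eps$, directly contradicting the inequality $|d(A',B')-d(A,B)|\le\eps$ guaranteed by regularity.

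The argument is entirely routine and I do not anticipate a genuine obstacle; the only small point to watch is the strict-versus-weak inequality in the definition of ``bad'' vertex, which ensures the concluding density gap is strictly larger than $\eps$ and hence contradicts (rather than merely matches) the regularity condition.
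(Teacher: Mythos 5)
Your proof is correct and is exactly the standard argument the paper has in mind when it says the proposition ``follows immediately from the definition'' (the paper omits the proof entirely). The one subtlety you flag --- that the strict inequality in the definition of a bad vertex yields a density gap strictly exceeding $\eps$ --- is handled properly, and the assumption $|A'|>\eps|A|$ indeed makes $(A',B')$ an admissible witness pair.
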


The regularity lemma asserts that each graph admits a partition into
relatively few vertex classes of equal size such that most pairs of these
classes form an $\eps$-regular pair. The following definition makes this
precise. A partition $V_0\dcup V_1 \dcup\dotsm\dcup V_k$ of $V$ with
$|V_0|\le\eps|V|$ is \emph{$(\eps,d)$-regular on} a graph $R=([k],E_R)$ if
$ij \in E_R$ implies that $(V_i,V_j)$ is an 
$(\eps,d)$-regular pair in $G$.
If such a partition exists, we also say that $R$ is an 
\emph{$(\eps,d)$-reduced graph} of $G$.
Moreover, $R$ is the \emph{maximal} $(\eps,d)$-reduced graph of the partition
$V_0\dcup V_1 \dcup\dotsm\dcup V_k$ if there is no $ij\not\in E_R$ with
$i,j\in[k]$ such that $(V_i,V_j)$ is $(\eps,d)$-regular.
A partition $V_0\dcup V_1 \dcup\dotsm\dcup V_k$ of $V$ is an
\emph{equipartition} if $|V_i|=|V_j|$ for all $i,j\in[k]$.
The partition classes $V_i$ with $i\in[k]$
are also called \emph{clusters} of $G$ and $V_0$ is the \emph{exceptional set}.
When the exceptional set $V_0$ is empty (or when we want to ignore it as well as
its size) then we may omit it and say that $V_1 \dcup\dots\dcup V_k$ is regular on
$R$.
An $(\eps,d)$-regular pair $(U,W)$ is
\emph{$(\eps,d)$-super-regular} if every vertex $u\in U$ has degree $\deg_W(u)\ge
d|W|$ and every $w\in W$ has $\deg_U(w)\ge d|U|$. For a graph $G=(V,E)$ a
partition $V=V_0\dcup V_1\dcup\dotsm\dcup V_k$ is said to be
\emph{super-regular on a graph $R$} with vertex set $V_R$, $V_R\subset[k]$, if
$(V_i,V_j)$ is super-regular whenever $ij$ is an edge of $R$.

In this paper we consider bipartite graphs and the regular partitions that
appear in the proof of Theorem~\ref{thm:bipbw} refine some bipartition and their
reduced graphs are bipartite. More precisely, for a bipartite graph $G=(A\dcup
B,E)$ we will obtain a partition $(A_0 \dcup B_0)\dcup A_1 \dcup
B_1 \dcup \dots \dcup A_k \dcup B_k$ that is $(\eps,d)$-regular (or
super-regular) on some bipartite graph~$R$ such that $A=A_0 \dcup\dots\dcup
A_k$ and $B=B_0\dcup\dots\dcup B_k$. In particular we have two different
exceptional sets now, one in $A$ called $A_0$ and one in $B$ called $B_0$, each of size~$\eps n$ at most. Such a
partition is an equipartition if $|A_1|=|B_1|=|A_2|=\dots=|A_k|=|B_k|$. In
addition, we consider only regular pairs running between the bipartition
classes, i.e., pairs of the form $(A_i,B_j)$. Consequently, all reduced graphs
(also the maximal reduced graph of a partition) are bipartite.

We now state the version of the regularity lemma that we will use.
This is a corollary of the degree form of the regularity lemma (see,
e.g.,~\cite[Theorem~1.10]{KS96}) and is tailored for embedding applications in
balanced bipartite graphs satisfying some minimum degree condition. We sketch
its proof below.

\begin{lemma}[regular partitions of bipartite graphs]
\label{lem:reg}
For every $\eps'>0$ and for every  $\Delta,k_0\in \Nat$ there
exists $K_0 = K_0(\eps',k_0) \in \Nat$ such that for every $0\leq d' \leq 1$,
for $$
\eps'':= \frac{2\Delta\eps'}{1-\eps'\Delta}
\quad \text{ and } \quad
d'':= d'-2\eps'\Delta\,,
$$
and for every bipartite graph $G = (A\dcup B, E)$ with $|A|=|B|\geq K_0$
and $\delta(G)\ge \nu |G|$ for some $0<\nu<1$ 
there exists a graph $R$ and an integer $k$ with $k_0\le k \le K_0$ with the 
following properties:
\begin{enumerate}[label=\iabc,leftmargin=*]
  \item\label{lem:reg:a} $R$ is an $(\eps',d')$-reduced graph of an
    equipartition of $G$ and $|V(R)|=2k$.
  \item\label{lem:reg:b} $\delta(R) \ge (\nu-d'-\eps'')|R|$.
  \item\label{lem:reg:c} For every subgraph $R^*\subseteq R$ with
    $\Delta(R^*)\le \Delta$ there is an
    equipartition $$
      A \dcup B = A''_0 \dcup B''_0 \dcup A''_1 \dcup B''_1 \dcup \dots \dcup
      A''_k \dcup B''_k 
    $$
    with $A''_i \subseteq A$ and $B''_i \subseteq B$ for all $0\le i \le k$ and
    $(\eps'',d'')$-reduced graph $R$, which in addition is 
    $(\eps'',d'')$-super-regular on $R^*$.
\end{enumerate}
\end{lemma}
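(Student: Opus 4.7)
The plan is to derive this lemma as a corollary of the degree form of Szemerédi's regularity lemma (as in \cite[Theorem~1.10]{KS96}). First I would apply that standard lemma to~$G$ starting from the pre-partition $\{A,B\}$, with cluster-number lower bound $2k_0$, regularity parameter~$\eps'$ and density parameter~$d'$; the resulting equipartition refines $\{A,B\}$, so every nonexceptional cluster lies entirely in one of the two bipartition classes, and $|A|=|B|$ together with equal cluster sizes forces the same number~$k$ of clusters per side (with $k_0\le k\le K_0$ after setting $K_0$ to be half of the constant the standard lemma produces). Labelling these clusters $A_1,\ldots,A_k$ and $B_1,\ldots,B_k$, and letting $R$ be the bipartite graph on $2k$ vertices (with parts indexing $A$- and $B$-clusters) such that $ij\in E(R)$ iff $(A_i,B_j)$ is $(\eps',d')$-regular of density at least~$d'$ yields item~\ref{lem:reg:a} at once.

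For item~\ref{lem:reg:b}, I would apply the usual cluster-degree counting argument. A vertex $v\in A_i$ has $\ge 2\nu n$ neighbours in~$B$, loses at most $O(\eps' n)$ to $B_0$ and, by the degree form, at most $(d'+\eps')\cdot 2n$ to edges in irregular or low-density pairs; the surviving neighbours lie in the $\deg_R(i)$ clusters $B_j$ with $ij\in E(R)$, each of size $\le n/k$. Rearranging and using $\eps''\ge\eps'$ gives $\delta(R)\ge(\nu-d'-\eps'')|R|$.

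The main work is in item~\ref{lem:reg:c}. Given $R^*\subseteq R$ with $\Delta(R^*)\le\Delta$, I would call a vertex $v\in A_i$ \emph{bad} if $\deg_{B_j}(v)<(d'-\eps')|B_j|$ for some~$j$ with $ij\in E(R^*)$; Proposition~\ref{lem:typical} together with $\deg_{R^*}(i)\le\Delta$ shows that at most $\Delta\eps'|A_i|$ vertices of $A_i$ are bad, and symmetrically for each $B_j$. After moving all bad vertices to the exceptional sets and trimming each remaining cluster down to the common size $L''\ge(1-\Delta\eps')|A_i|$ to restore equipartition, a routine subset-to-subset regularity calculation (exploiting the identity $\eps''(1-\Delta\eps')=2\Delta\eps'\ge\eps'$, so any subsets sufficiently large in the trimmed pair are also sufficiently large in the original pair) shows that every pair $(A''_i,B''_j)$ with $ij\in E(R)$ is $(\eps'',d'')$-regular, while for $ij\in E(R^*)$ and non-bad $v\in A''_i$ the estimate $\deg_{B''_j}(v)\ge(d'-\eps')|B_j|-\Delta\eps'|B_j|\ge d''|B''_j|$ yields the super-regular degree condition. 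The growth of the exceptional sets is controlled by $|A''_0|,|B''_0|\le(1+\Delta)\eps' n\le\eps'' n$, which holds because $\eps''\ge 2\Delta\eps'\ge(1+\Delta)\eps'$ for $\Delta\ge 1$.

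The main obstacle I anticipate is the interlocking numerical bookkeeping in Stage~3: the particular choices $\eps''=2\Delta\eps'/(1-\Delta\eps')$ and $d''=d'-2\Delta\eps'$ must simultaneously absorb the regularity loss from restricting to trimmed subsets, the density loss, the super-regular degree deficit on $R^*$-edges, and the enlargement of the exceptional sets. Each of these constraints collapses to an elementary inequality in $\Delta$ and $\eps'$, so the difficulty is organizational rather than deep.
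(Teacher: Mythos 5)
Your proposal is correct and follows essentially the same route as the paper: the paper's own (sketched) proof also runs the degree form of the regularity lemma starting from the pre-partition $\{A,B\}$, takes the maximal bipartite reduced graph to get \ref{lem:reg:a} and the minimum-degree inheritance \ref{lem:reg:b}, and obtains \ref{lem:reg:c} by moving to the exceptional sets the few vertices with too small degree into $R^*$-neighbouring clusters. You merely fill in the numerical bookkeeping that the paper delegates to the cited propositions of K\"uhn, Osthus, and Taraz.
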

\begin{proof}[Proof (sketch)] 
  The proof of this lemma is a standard combination of three standard tools. 
  As a first step we simulate the proof of the degree-form (see \cite{KSS98}, 
  Lemma 2.1, or the survey~\cite{KS96}) of the regularity
  lemma starting with $A\dcup B$ as the
  initial partition (see also~\cite[Chapter~7.4]{Diestel}).
  This yields a partition into clusters $A_0,\dots,B_k$ such that for all
  vertices $v\not\in A_0\cup B_0$ there are at most $(d'+\eps')n$ edges $e\in E$
  with $v\in e$ such that $e$ is not in some $(\eps',d')$-regular pair
  $(A_i,B_j)$. Hence we get~\ref{lem:reg:a}. Let $R$ be the maximal
  (bipartite) $(\eps',d')$-reduced graph of this partition. 
  Then it is easy to see that $R$ inherits the minimum degree
  condition of $G$ (except for a small loss), see
  \cite[Proposition~9]{KueOstTar}. This yields~\ref{lem:reg:b}.
  Finally, for all pairs $(A_i,B_j)$ with $i,j\in[k]$ that correspond to edges
  in $R^*$ we take those vertices in $A_i$ or $B_i$ that have too few
  edges in $(A_i,B_j)$ and move them to $A_0$ or $B_0$, respectively. See
  \cite[Proposition~8]{KueOstTar} for details. This yields~\ref{lem:reg:c}.
\end{proof}

\subsection{Embedding into regular partitions}

For embedding \emph{spanning subgraphs}~$H$ into graphs~$G$ with high minimum
degree the blow-up lemma of Koml\'os, S\'ark\"ozy and Szemer\'edi~\cite{KSS_bl}
has proved to be an extremely valuable tool. 
The blow-up lemma guarantees that bipartite spanning graphs of bounded degree
can be embedded into sufficiently super-regular pairs. In fact, this lemma
is more general and allows the embedding of graphs $H$ into partitions that
are super-regular on some graph $R$ if there is a homomorphism from $H$ to $R$
that does not send too many vertices of $H$ to each cluster of $R$. 

When embedding a spanning graph $H$ into a host graph $G$ a well-established
strategy is to utilise the blow-up lemma on small super-regular
``spots'' in a regular partition of~$G$ for embedding most of the vertices of~$H$,
and to use a greedy embedding method to embed the few other vertices first.
This embedding method is summarised in the next lemma, the general embedding
lemma. Before stating it we need to identify conditions under which it is
possible to proceed in the way just described. This is addressed in the
following definition that specifies when a partition of~$H$ is ``compatible''
with a regular partition of~$G$ with reduced graph~$R$ and a subgraph~$R'$ of~$R$
such that edges of~$R'$ correspond to dense super-regular pairs. In this
definition we require that the partition of~$H$ has smaller partition classes
than the partition of~$G$ (condition~\ref{def:comp:0}), and that edges of~$H$ run
only between partition classes that correspond to a dense regular pair in~$G$
(condition~\ref{def:comp:1}). Further, in each partition class~$W_i$ of~$H$ we
identify two subsets~$S_i$ and~$T_i$ that are both supposed to be small
(condition~\ref{def:comp:2}). The set~$S_i$ contains those vertices that send
edges over pairs that do not belong to the super-regular pairs specified by~$R'$
and~$T_i$ contains neighbours of such vertices.

\begin{definition}[$\eps$-compatible]
\label{def:comp}
  Let $H=(W,E_H)$ and $R=([k],E_R)$ be graphs and let $R'=([k],E_{R'})$ be a
  subgraph of $R$. We say that a vertex partition $W=(W_i)_{i\in[k]}$ of $H$ is
  \emph{$\eps$-compatible} 
  with an integer partition $(n_i)_{i\in[k]}$ of $n$
  and with $R'\subset R$ if the following holds. For
  $i\in[k]$ let $S_i$ be the set of vertices in $W_i$ with neighbours in some
  $W_j$ with $ij\not\in E_{R'}$ and $i\neq j$, set $S:=\bigcup S_i$ and
  $T_i:=N_H(S)\cap (W_i\setminus S)$. Then for all $i,j\in[k]$ we have that
  \begin{enumerate}[label=\irom,leftmargin=*]
    \item\label{def:comp:0}
      $|W_i|\le n_i$,
    \item\label{def:comp:1}
      $xy\in E_H$ for $x\in W_i$ and $y\in W_j$ implies $ij\in E_R$,
    \item\label{def:comp:2}
      $|S_i|\le\eps n_i$ and
      $|T_i|\le\eps\cdot\min\{n_j:\text{$i$ and $j$ are in the same
      component of $R'$}\}$.
  \end{enumerate}
  The partition $W=(W_i)_{i\in[k]}$ of $H$ is $\eps$-compatible
  with a partition $V=(V_i)_{i\in[k]}$ of a graph $G$ and with $R'\subset R$ if 
  $W=(W_i)_{i\in[k]}$ is $\eps$-compatible with $(|V_i|)_{i\in[k]}$ and with
  $R'\subset R$.
\end{definition}

The general embedding lemma asserts that a bounded-degree graph~$H$ can be
embedded into a graph~$G$ if~$H$ and~$G$ have compatible partitions. A proof can
be found in~\cite[Section~3.3.3]{JuliasDiss}.

\begin{lemma}[general embedding lemma]
\label{lem:gel}
  For all $d,\Delta,r>0$ there is a constant $\eps=\eps(d,\Delta,r)>0$ such that
  the following holds.
  Let $G=(V,E)$ be an $n$-vertex graph that has a partition
  $V=(V_i)_{i\in[k]}$ with $(\eps,d)$-reduced graph $R$ on $[k]$ which is
  $(\eps,d)$-super-regular on a graph $R'\subset R$  with connected components 
  having at most $r$ vertices each.
  Further, let $H=(W,E_H)$ be an $n$-vertex graph with maximum degree
  $\Delta(H)\le\Delta$ that has a vertex partition
  $W=(W_i)_{i\in[k]}$ which is $\eps$-compatible with $V=(V_i)_{i\in[k]}$ and
  $R'\subset R$. 
  Then $H\subset G$.
\qed
\end{lemma}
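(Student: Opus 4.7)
The approach is to embed~$H$ into~$G$ in two phases: first embed $S=\bigcup_{i\in[k]} S_i$ greedily into the corresponding clusters of~$G$, then apply the Koml\'os--S\'ark\"ozy--Szemer\'edi blow-up lemma with image restrictions separately to each connected component of~$R'$ to embed the remaining vertices. I would choose $\eps=\eps(d,\Delta,r)>0$ small enough that $(d-\eps)^\Delta\gg(\Delta+1)\eps$ and that the blow-up lemma with image restrictions applies at the parameters produced below.

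\textbf{Phase 1 (greedy embedding of~$S$).} For every yet-unembedded $w\in W_i$ I maintain a candidate set $\mathcal{C}(w)\subseteq V_i$, initialised to $V_i$. Processing the vertices of~$S$ in arbitrary order $v_1,\dots,v_s$, when I come to $v_t\in S_i$ I first delete from $\mathcal{C}(v_t)$ every already used image (at most $|S_i|\le\eps|V_i|$ vertices) and every vertex of $V_i$ that is \emph{atypical} for some current $\mathcal{C}(w)$ with $w$ an unembedded neighbour of~$v_t$, in the sense of Proposition~\ref{lem:typical}. Since $v_t$ has at most~$\Delta$ such neighbours, this removes at most $(\Delta+1)\eps|V_i|$ vertices. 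On the other hand, iterated application of Proposition~\ref{lem:typical} along the already embedded neighbours of~$v_t$ gives $|\mathcal{C}(v_t)|\ge(d-\eps)^\Delta|V_i|$, so by the calibration of~$\eps$ the reduced set is still nonempty. I pick any $\phi(v_t)$ in it and update $\mathcal{C}(w)\leftarrow \mathcal{C}(w)\cap N_G(\phi(v_t))$ for every unembedded neighbour~$w$ of~$v_t$; by the typicality chosen for $\phi(v_t)$ the size of each such $\mathcal{C}(w)$ drops by at most a factor of $(d-\eps)^{-1}$. At the end of Phase~1, each $w\in T_i$ has $|\mathcal{C}(w)|\ge(d-\eps)^\Delta n_i$.

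\textbf{Phase 2 (blow-up on each component of~$R'$).} Fix a connected component~$C$ of~$R'$ and set $V_i'=V_i\setminus\phi(S)$ for $i\in V(C)$. From Definition~\ref{def:comp}~\ref{def:comp:0} together with $\sum|W_i|=\sum|V_i|=n$ one gets $|W_i|=|V_i|$ and hence $|V_i'|=|W_i\setminus S_i|$; removing only an $\eps$-fraction from each side of each super-regular pair preserves super-regularity with slightly weakened parameters. By Definition~\ref{def:comp}~\ref{def:comp:1} and the very definition of~$S$, every edge of~$H$ incident to~$W_i\setminus S_i$ (for $i\in V(C)$) stays inside $\bigcup_{j\in V(C)}W_j\setminus S_j$ and crosses only pairs indexed by~$E(C)$. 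By Definition~\ref{def:comp}~\ref{def:comp:2}, the number of image-restricted vertices per cluster, $|T_i|$, is at most $\eps\min_{j\in V(C)}n_j$, and each associated $\mathcal{C}(w)$ is linear in~$|V_i|$ by Phase~1. Since $|V(C)|\le r$ and $\Delta(H)\le\Delta$, the blow-up lemma with image restrictions~\cite{KSS_bl} extends~$\phi$ to an embedding of $H[\bigcup_{i\in V(C)}W_i\setminus S_i]$ into $G[\bigcup_{i\in V(C)}V_i']$ respecting the candidate sets. Distinct components of~$R'$ use disjoint clusters, so their embeddings combine with~$\phi$ to give the desired copy of~$H$ in~$G$.

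\textbf{Main obstacle.} The delicate part is Phase~1: keeping $\mathcal{C}(w)$ linear in~$|V_i|$ for every $w\in T$ \emph{throughout} the embedding of~$S$ while still leaving room to place each~$\phi(v_t)$. Proposition~\ref{lem:typical} drives the bookkeeping, but $\eps$ must be small compared to $d^\Delta/\Delta$ \emph{and} to the image-restriction constants demanded by the blow-up lemma (which depend on~$r$). Calibrating~$\eps$ to simultaneously survive the atypical-vertex loss in Phase~1 and the image-restriction dilution in Phase~2 is precisely what forces the dependence $\eps=\eps(d,\Delta,r)$.
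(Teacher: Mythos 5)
Your two-phase argument --- greedily pre-embedding $S$ via the typicality bookkeeping of Proposition~\ref{lem:typical}, then invoking the blow-up lemma with image restrictions component-by-component on $R'$, with $T_i$ as the image-restricted vertices --- is correct and is essentially the proof the paper points to (it gives no proof itself, citing \cite[Section~3.3.3]{JuliasDiss}, where exactly this greedy-plus-blow-up scheme is carried out). Your accounting of why $\eps$ must depend on $d$, $\Delta$ and $r$, and your use of conditions \ref{def:comp:0}--\ref{def:comp:2} to match cluster sizes, confine edges to $E(C)$, and control the number of restricted vertices, all check out.
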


For applying the general embedding lemma to \emph{spanning} graphs~$H$ we need a
partition of the graph~$H$ whose partition classes match the sizes of a
regular partition of~$G$ \emph{precisely}. However, usually we cannot guarantee
that this is the case for a regular partition obtained from
Lemma~\ref{lem:reg}. Hence it will become necessary to modify such a regular
partition slightly by moving some vertices into different clusters. 
The following lemma asserts that the resulting partition is still regular with
somewhat worse parameters.
For a proof see~\cite[Proposition 8]{BoeSchTar_JCTB}.

\begin{proposition}\label{prop:moving-vertices} 
  Let $(A,B)$ be an $(\eps, d)$-regular pair and let $\sh{A}$ and
  $\sh{B}$ be vertex sets with $|\sh{A}\triangle A|\le\alpha|\sh{A}|$
  and $|\sh{B}\triangle B|\le\beta|\sh{B}|$. Then $(\sh{A},\sh{B})$ is
  an $(\sh{\eps},\sh{d})$-regular pair where
  \begin{equation*}
    \sh{\eps} := \eps + 3(\sqrt{\alpha} + \sqrt{\beta}) 
    \qquad \text{and} \qquad \sh{d} := d - 2(\alpha + \beta).
  \end{equation*}
  If, moreover, $(A,B)$ is $(\eps, d)$-super-regular and each vertex $v$ in
  $\sh{A}$ has at least $d |\sh{B}|$ neighbours in $\sh{B}$ and each
  vertex $v$ in $\sh{B}$ has at least $d |\sh{A}|$ neighbours in
  $\sh{A}$, then $(\sh{A},\sh{B})$ is $(\sh{\eps},
  \sh{d})$-super-regular with $\sh{\eps}$ and $\sh{d}$ as above.
\qed
\end{proposition}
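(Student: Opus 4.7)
The plan is to transfer $(\eps,d)$-regularity of $(A,B)$ to $(\hat A,\hat B)$ by intersecting each test set with the original side, invoking regularity of $(A,B)$ there, and then paying a small additional cost for the symmetric differences on both sides and for the shift between ``hat'' and ``bare'' normalisations. Accordingly, the error budget splits into three contributions, each of which must fit inside the slack of $\hat\eps$ and $\hat d$.

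First I would fix arbitrary $\hat A'\subseteq\hat A$ and $\hat B'\subseteq\hat B$ of relative size at least $\hat\eps$, and set the cleaned test sets $A':=\hat A'\cap A$ and $B':=\hat B'\cap B$. Because $|\hat A'\setminus A|\le\alpha|\hat A|$ while $|\hat A'|\ge\hat\eps|\hat A|\ge 3\sqrt\alpha\,|\hat A|$, only a $\sqrt\alpha/3$-fraction of $\hat A'$ is lost in passing to $A'$, and symmetrically on the $B$-side; combined with $|A|\le(1+\alpha)|\hat A|$ this forces $|A'|\ge\eps|A|$ and $|B'|\ge\eps|B|$. The $(\eps,d)$-regularity of $(A,B)$ then applies and gives $|d(A',B')-d(A,B)|\le\eps$.

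Next I would bound $|d(\hat A',\hat B')-d(A',B')|$ and $|d(\hat A,\hat B)-d(A,B)|$. Edges present in $(\hat A',\hat B')$ but absent from $(A',B')$ are incident to $\hat A'\setminus A$ or $\hat B'\setminus B$, contributing at most $\alpha|\hat A||\hat B'|+\beta|\hat B||\hat A'|$; after dividing by $|\hat A'||\hat B'|$ and using $|\hat A'|\ge\hat\eps|\hat A|$ this yields a density perturbation of at most $(\alpha+\beta)/\hat\eps\le\tfrac{1}{3}(\sqrt\alpha+\sqrt\beta)$, while the size-ratio $|A'||B'|/(|\hat A'||\hat B'|)$ contributes a further error of the same order. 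The analogous comparison between the full pairs (no $\hat\eps$ in the denominator) gives $|d(\hat A,\hat B)-d(A,B)|\le 2(\alpha+\beta)$, which together with $d(A,B)\ge d$ yields $d(\hat A,\hat B)\ge\hat d$. Summing the three comparisons delivers $|d(\hat A',\hat B')-d(\hat A,\hat B)|\le\hat\eps$.

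The super-regular addendum requires no further argument: the regularity parameters of $(\hat A,\hat B)$ have just been verified, and the minimum-degree condition is precisely the extra hypothesis. The real obstacle is bookkeeping rather than any clever step; the constants $\hat\eps$ and $\hat d$ are tuned so that the terms $\alpha/\hat\eps$ and $\beta/\hat\eps$ collapse into the $3(\sqrt\alpha+\sqrt\beta)$ slack, and one has to check throughout that denominators such as $|\hat A|/|\hat A'|$ stay within what $\hat\eps$ allows.
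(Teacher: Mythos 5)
The paper does not prove this proposition itself but refers to \cite[Proposition 8]{BoeSchTar_JCTB}, and your argument is the standard one used there: intersect the test sets with the original clusters, apply the regularity of $(A,B)$, and absorb the symmetric-difference and renormalisation errors into the $3(\sqrt{\alpha}+\sqrt{\beta})$ and $2(\alpha+\beta)$ slack. The sketch is correct (modulo the implicit and harmless assumption $\alpha,\beta\le 1$ so that $\alpha\le\sqrt{\alpha}$), so nothing further is needed.
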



\section{The proof of the main theorem}
\label{sec:proof}

In the proof of Theorem~\ref{thm:bipbw} we will use the general embedding lemma
(Lemma~\ref{lem:gel}). For applying this lemma we need compatible
partitions of the graphs $G$ and $H$ which are provided by the next two lemmas.
We start with the lemma for~$G$ which
constructs a regular partition of~$G$ whose reduced graph~$R$ contains a perfect
matching within a Hamilton cycle of~$R$. The lemma guarantees,
moreover, that the regular partition is super-regular on this perfect matching
(see Figure~\ref{fig:G}) and that the cluster sizes in the partition can be
slightly changed.

We remark that, throughout, $A\dcup B$ will denote the vertex set of the
host graph~$G$ while $X\dcup Y$ is the vertex set of the bipartite
graph~$H$ we would like to embed. The sets $A_i$ and $B_i$ with $i\in[k]$ for
some integer $k$ will denote the clusters of a regular partition of $G$ as
well as for the vertices of a corresponding reduced graph.

\begin{lemma}[Lemma for $G$]\label{lem:G}
For every $\gamma > 0$ there exists $d_{\subsc{lg}}>0$
such that for every $\eps>0$ and every $k_0\in \Nat$ there exist
$K_0\in \Nat$   
and $\xi_{\subsc{lg}} > 0$ with the following properties:
For every $n\geq K_0$ and for every balanced bipartite graph $G = (A \dcup B,
E)$ on $2n$ vertices with $\delta(G)\geq \bigl (1/2 + \gamma \bigr) n$
there exists $k_0\leq k\leq K_0$ and a partition $(n_i)_{i\in [k]}$
of $n$ with $n_i\ge n/(2k)$ such that for every partition
$\parti{a}$
of $n$ and $\parti{b}$
of $n$
satisfying $a_i \le n_i + \xi_{\subsc{lg}} n$ and $b_i \le n_i +
\xi_{\subsc{lg}} n$, for all $i\in [k]$, there exist partitions
\[
A= A_1 \dcup \dotsm \dcup A_k
\quad \text{and} \quad
B= B_1 \dcup \dotsm \dcup B_k
\]
such that
\begin{enumerate}[label={\rm(G\arabic{*})}]
\item\label{lem:G:1} $|A_i|=a_i$ and $|B_i|=b_i$ for all $i\in [k]$,
\item\label{lem:G:2} $(A_i, B_i)$ is $(\eps, d_{\subsc{lg}})$-super-regular for every $i\in [k]$.
\item\label{lem:G:3} $(A_i, B_{i+1})$ is $(\eps, d_{\subsc{lg}})$-regular for every $i\in [k]$.
\end{enumerate}
\end{lemma}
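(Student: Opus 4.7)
The plan is to combine the bipartite regularity lemma (Lemma~\ref{lem:reg}), the Moon--Moser theorem (Theorem~\ref{thm:moon}) for finding a Hamilton cycle in the reduced graph, and two rounds of vertex redistribution whose effect on the regularity parameters is controlled by Proposition~\ref{prop:moving-vertices}. First, I choose $d'$ and $\eps'$ small in terms of $\gamma$ and $k_0'\ge\max\{k_0,2/\gamma\}$, and apply Lemma~\ref{lem:reg} to $G$ with density ratio $\nu=\tfrac12(\tfrac12+\gamma)$. By~\ref{lem:reg:a}--\ref{lem:reg:b} this yields an integer $k_0\le k\le K_0$, an equipartition of $G$ with $(\eps',d')$-reduced bipartite graph $R$ on $2k$ vertices, and $\delta(R)\ge(\tfrac12+\tfrac{\gamma}{2})k$. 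Applying Theorem~\ref{thm:moon} to $R$ (which is legal since $k\ge 2/\gamma$) gives a Hamilton cycle $C$; relabelling clusters along $C$ as $A_1,B_1,\dots,A_k,B_k$, the set $M:=\{A_iB_i:i\in[k]\}$ is a subgraph of $R$ of maximum degree $1$. Invoking Lemma~\ref{lem:reg}\ref{lem:reg:c} with $R^*:=M$ produces a refined equipartition $(A''_0\dcup B''_0)\dcup A''_1\dcup B''_1\dcup\cdots\dcup A''_k\dcup B''_k$ whose reduced graph is still $R$ and which is $(\eps'',d'')$-super-regular on $M$.

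The first redistribution round absorbs the exceptional sets $A''_0$ and $B''_0$ into clusters in a balanced way. Every $v\in A''_0$ has at least $(\tfrac12+\gamma)n-|B''_0|$ neighbours in $B\setminus B''_0$, so by averaging $v$ has at least $d''|B''_i|$ neighbours in $B''_i$ for a positive fraction of the $k$ clusters; similarly for $v\in B''_0$. A Hall/flow argument then provides an assignment $\phi\colon A''_0\cup B''_0\to[k]$ satisfying (i) every placed vertex has the required fraction of neighbours in its target matching cluster, and (ii) $|\phi^{-1}(i)\cap A''_0|=|\phi^{-1}(i)\cap B''_0|$ for every $i$. Setting $\tilde A_i:=A''_i\cup(\phi^{-1}(i)\cap A''_0)$, $\tilde B_i:=B''_i\cup(\phi^{-1}(i)\cap B''_0)$, and $n_i:=|\tilde A_i|=|\tilde B_i|$, condition~(i) combined with Proposition~\ref{prop:moving-vertices} preserves regularity on every cycle edge of $R$ and super-regularity on every matching edge. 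Note that $\sum_in_i=n$ and $n_i\ge(1-\eps')n/k\ge n/(2k)$, so this is a legal choice of $(n_i)_{i\in[k]}$.

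In the second round, given admissible sizes $(a_i)$, $(b_i)$, I move at most $k\xi_{\subsc{lg}}n$ vertices per side between clusters to achieve $|A_i|=a_i$ and $|B_i|=b_i$. Every vertex relocated into $A_i$ is chosen so as to retain $\ge d_{\subsc{lg}}|\tilde B_i|$ neighbours in $\tilde B_i$; this is possible because $\delta(G)\ge(\tfrac12+\gamma)n$ forces typical vertices in every $\tilde A_j$ to have many neighbours in most $B$-clusters, and the problem is again set up as a Hall/flow problem. A second application of Proposition~\ref{prop:moving-vertices} yields the partitions required by~\ref{lem:G:1}--\ref{lem:G:3}.

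The main obstacle is parameter bookkeeping: each round of moving degrades the regularity constant by $3(\sqrt\alpha+\sqrt\beta)$ per Proposition~\ref{prop:moving-vertices}, so $\eps'$ must be chosen small enough in terms of the target $\eps$ that the first perturbation leaves room for the second, and $\xi_{\subsc{lg}}$ small enough in terms of $\eps$ and the already-fixed $K_0$ so that the final parameters meet $(\eps,d_{\subsc{lg}})$. A secondary subtlety is verifying the Hall conditions in both redistribution rounds, which rely on the full slack $\gamma>0$ above the Moon--Moser threshold to guarantee that every exceptional or relocated vertex has sufficiently many valid target clusters.
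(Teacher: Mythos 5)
Your overall architecture---regularity lemma, Moon--Moser on the reduced graph, absorbing the exceptional sets in a balanced and degree-aware fashion, then a final size adjustment controlled by Proposition~\ref{prop:moving-vertices}---is the same as the paper's, and your first redistribution round is essentially the paper's argument (the paper pairs up $(x,y)\in A_0''\times B_0''$ and places both vertices at a common index from $I(x)\cap I(y)$, which has size at least $\gamma k$, choosing the least-loaded cluster so that each cluster gains only $O(\eps'' n/(\gamma k))$ vertices). Taking $R^*$ to be the perfect matching rather than the whole Hamilton cycle is also fine, since \ref{lem:G:3} only asks for regularity.

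The gap is in your second round. You relocate vertices directly from an arbitrary source $\tilde A_j$ to an arbitrary sink $\tilde A_i$, justified by the claim that the minimum degree forces typical vertices of $\tilde A_j$ to have many neighbours in \emph{most} $B$-clusters. The averaging argument only yields that each vertex $x\in A$ has at least $d''|B''_i|$ neighbours in $B''_i$ for slightly more than half of the indices $i$ (about $(\tfrac12+\tfrac12\gamma)k$ of them), and these index sets vary from vertex to vertex. If there is a single sink $i_0$ and a single source $j_0$, nothing prevents every vertex of $\tilde A_{j_0}$ from having essentially no neighbours in $\tilde B_{i_0}$: the pair $(A_{j_0},B_{i_0})$ need not be regular or dense, and $\delta(G)\ge(\tfrac12+\gamma)n$ is consistent with $e(A_{j_0},B_{i_0})=0$. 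So the Hall condition for your flow problem can fail. This is precisely why the paper proves Lemma~\ref{lem:adjust}: it only ever moves a vertex from $A_m$ to the \emph{adjacent} cluster $A_{m+1}$ along the Hamilton cycle, where regularity of $(A_m,B_{m+1})$ guarantees (via Proposition~\ref{lem:typical}) that all but $\eps|A_m|$ vertices of $A_m$ have enough neighbours in $B_{m+1}$ to preserve super-regularity of $(A_{m+1},B_{m+1})$; the excess is then propagated around the cycle from source to sink one vertex at a time (and on the $B$-side in the reverse direction, since only $(A_{i-1},B_i)$ is guaranteed regular). You need this cycle-following adjustment, or an equivalent multi-hop routing along edges of the reduced graph, in place of the direct relocation; the rest of your parameter bookkeeping then goes through as you describe.
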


\begin{figure}[ht]
    \begin{center}
      \psfrag{A1}{\scalebox{.7}{$A_{\l-1}$}}
      \psfrag{A2}{\scalebox{.7}{$A_\l$}}
      \psfrag{A3}{\scalebox{.7}{$A_{\l+1}$}}
      \psfrag{A4}{\scalebox{.7}{$A_{\l+2}$}}
      \psfrag{B1}{\scalebox{.7}{$B_{\l-1}$}}
      \psfrag{B2}{\scalebox{.7}{$B_\l$}}
      \psfrag{B3}{\scalebox{.7}{$B_{\l+1}$}}
      \psfrag{B4}{\scalebox{.7}{$B_{\l+2}$}}
      \includegraphics{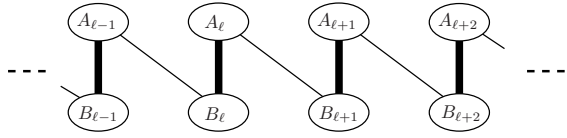}
    \end{center}
    \caption{The regular partition constructed by Lemma~\ref{lem:G} with
      super-regular pairs $(A_i, B_i)$ and regular pairs $(A_i, B_{i+1})$.}
  \label{fig:G}
\end{figure}

The proof of this lemma is presented in Section~\ref{sec:G}. The following
lemma, which we will prove in Section~\ref{sec:H}, constructs the corresponding
partition of~$H$.
It guarantees that the $2k$ partition classes of~$H$ are roughly of the same
sizes as the corresponding partition classes of~$G$ (see~\ref{lem:H:3}), and that all
edges of~$H$ are mapped to edges of a cycle~$C$ on $2k$ vertices and all edges
except those incident to a very small set~$S$ (see~\ref{lem:H:1}) are in fact
mapped to the edges of a perfect matching in~$C$ (see~\ref{lem:H:2}).

\begin{lemma}[Lemma for $H$]\label{lem:H}
For every $k\in \Nat$ 
and every $\xi > 0$ 
there exists $\beta > 0$ 
and $n_0\in \Nat$ 
such that for every $n\geq n_0$ 
and for every balanced bipartite graph $H = (X\dcup Y, F)$ 
on $2n$ vertices 
having $\bw(H)\leq \beta n$ 
and for every integer partition $n=n_1+\dotsm + n_k$ 
with $n_i \leq n/8$ 
there exists a set $S\subseteq V(H)$ 
and a graph homomorphism $f\colon V(H) \rightarrow V(C)$, 
where $C$ is the cycle on the vertices $A_1,B_2,A_2,\dotsc, B_k, A_k, B_1, A_1$, 
such that 
\begin{enumerate}[label={\rm(H\arabic{*})}]
\item\label{lem:H:1} $|S|\leq \xi \cdot 2k \cdot n$,
\item\label{lem:H:2} for every $\{x,y\}\in F$ with 
$x\in X\backslash S$ and $y\in Y\backslash S$ there is $i\in [k]$ such that
$f(x)\in A_i$ and $f(y)\in B_i$,
\item\label{lem:H:3} $|f^{-1}(A_i)| < n_i + \xi n$ and $|f^{-1}(B_i)| < n_i +
\xi n$ for every $i\in [k]$.
\end{enumerate}
\end{lemma}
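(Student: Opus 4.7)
The plan is to use the bandwidth ordering of $H$ to partition $V(H)$ into $k$ consecutive \emph{slabs} $I_1,\dotsc,I_k$ and to map each slab $I_i$ almost entirely onto the matching pair $(A_i,B_i)$ of the cycle $C$, using the cycle edges $\{A_i,B_{i+1}\}$ to absorb the $H$-edges that cross from $I_i$ into $I_{i+1}$. The main preparatory step is to produce an ordering $\sigma=(v_1,\dotsc,v_{2n})$ of $V(H)$ with bandwidth $O(\beta n)$ and the additional \emph{balance} property $|x(j)-y(j)|\le \xi n/2$ for every $j\in[0,2n]$, where $x(j)$ and $y(j)$ denote the numbers of $X$- and $Y$-vertices among $v_1,\dotsc,v_j$. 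This will be the main obstacle of the proof: a general bandwidth ordering of a balanced bipartite $H$ can have colour discrepancy $\Theta(n)$ when the components of $H$ are themselves unbalanced (think of disjoint caterpillars with opposite excess sides). I would overcome this by decomposing $H$ into connected components, ordering each with small bandwidth, and greedily interleaving chunks of size $\Theta(\beta n)$ cut from the various component orderings so that the running discrepancy stays $O(\beta n)$; the interleaving inflates bandwidth only by a constant factor.

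With the balanced ordering at hand, I would pick cut points $0=p_0<p_1<\dotsb<p_k=2n$ by taking $p_i$ to be the smallest $j$ with $|\{v_1,\dotsc,v_j\}\cap X|=n_1+\dotsb+n_i$. Then the slab $I_i=\{v_{p_{i-1}+1},\dotsc,v_{p_i}\}$ contains exactly $n_i$ vertices of $X$, and the balance property forces $|I_i|=2n_i\pm O(\xi n)$, hence $|I_i\cap Y|\le n_i+\xi n$. I would define
\[
S=\bigcup_{i=1}^{k-1}\{v\in V(H):p_i-\beta n<\sigma(v)\le p_i+\beta n\},
\]
so that $|S|\le 2(k-1)\beta n\le \xi\cdot 2kn$ whenever $\beta\le\xi$, which is~\ref{lem:H:1}.

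Finally I would define the homomorphism $f$ by sending each $y\in Y\cap I_i$ to $B_i$, each $x\in X\cap I_i$ with $\sigma(x)>p_{i-1}+\beta n$ to $A_i$, and each $x\in X\cap I_i$ with $\sigma(x)\le p_{i-1}+\beta n$ (for $i\ge 2$) to $A_{i-1}$. Because $\sigma$ has bandwidth at most $\beta n$, any edge of $H$ has its endpoints in two slabs whose indices differ by at most one. A short case analysis then shows $f$ is a graph homomorphism to $C$: intra-slab edges are routed through the matching edge $(A_i,B_i)$; forward-crossing edges $x\in I_i,\,y\in I_{i+1}$ through the cycle edge $(A_i,B_{i+1})$; and any backward-crossing edge with $x\in I_i,\,y\in I_{i-1}$ must satisfy $\sigma(x)\le p_{i-1}+\beta n$ by the bandwidth bound, so the left-shift rule places its image on the matching edge $(A_{i-1},B_{i-1})$. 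Condition~\ref{lem:H:2} then holds because any edge with both endpoints outside $S$ lies entirely inside the interior of a single slab and is routed through a matching pair $(A_i,B_i)$, while condition~\ref{lem:H:3} follows from the slab-size bounds together with the observation that the left-shift rule moves at most $\beta n$ vertices between two consecutive $A$-preimages.
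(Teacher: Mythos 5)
Your reduction of the lemma to the existence of a \emph{balanced} bandwidth ordering (every prefix has colour discrepancy $O(\xi n)$) is exactly where the proof breaks, and your proposed construction of such an ordering does not work. Interleaving chunks from different connected components is useless when $H$ is connected, and a single connected component can force $\Theta(n)$ prefix discrepancy along its bandwidth ordering. Concretely, let $H$ be the caterpillar with spine $u_1,\dotsc,u_{2m}$ (colours alternating), with two pendant leaves attached to $u_t$ for every odd $t\le m$ (these leaves lie in $Y$) and two pendant leaves attached to $u_t$ for every even $t>m$ (these leaves lie in $X$). This $H$ is connected, balanced on $2n=4m$ vertices, has maximum degree $4$ and bandwidth $O(1)$, yet the prefix consisting of the first half of the spine together with its leaves has discrepancy $m=n/2$. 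Your method produces no balanced ordering here; and note that you cannot fix this by cutting the ordering into chunks and permuting them to control partial sums, because permuting chunks of a \emph{connected} piece separates adjacent chunks that carry edges between them and thereby destroys the bandwidth bound. (For this particular caterpillar a balanced ordering happens to exist by interleaving the two halves of the spine, but that is an ad hoc within-component rearrangement that your argument does not perform and that you would still have to justify in general.) Since conditions \ref{lem:H:3} and the slab sizes in your construction all rest on this unproved balance property, the proof as written has a genuine gap at its central step.

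The paper sidesteps the problem rather than solving it: it keeps the (possibly very unbalanced) bandwidth pieces $W_1,\dotsc,W_\ell$ in their original order and instead chooses the assignment $\phi\colon[\ell]\to[k]$ of pieces to cycle positions \emph{non-monotonically} and at random, using Chernoff and Hoeffding bounds (Lemma~\ref{lem:num}) to show that each position $i$ receives a package whose $X$- and $Y$-parts are both below $n_i+\xi n$. The price of non-monotonicity is that consecutive pieces may land on far-apart vertices of $C$, which is repaired by the ``linking vertices'' at the start of each piece that walk the image along the cycle from $A_{\phi(i-1)}B_{\phi(i-1)}$ to $A_{\phi(i)}B_{\phi(i)}$. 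If you want to keep your slab-based picture, you would need to replace your balancing step by some such grouping argument; as it stands, the consecutive-slabs-to-consecutive-cycle-positions scheme cannot be made to satisfy \ref{lem:H:3} for graphs like the caterpillar above.
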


With Lemmas~\ref{lem:gel} (the general embedding lemma), Lemma~\ref{lem:G} (the
lemma for~$G$) and Lemma~\ref{lem:H} (the lemma for~$H$) at our disposal, we are
ready to give the proof of the main theorem.

\begin{proof}[Proof of Theorem~\ref{thm:bipbw}]
  Given~$\gamma$ and $\Delta$, let $d$ be the constant provided by
  Lemma~\ref{lem:G} for input $\gamma$. Let $\eps$ be the constant
  Lemma~\ref{lem:gel} returns for input $d$, $\Delta$, and $r=2$. We continue the
  application of Lemma~\ref{lem:G} with input $\eps$ and $k_0:=2$ and get
  constants $K_0$ and $\xi_\subsc{lg}$ and set
  $\xi_\subsc{lh}:=\xi_\subsc{lg}\eps/(100\Delta K_0^2)$. Further let $\beta$ be
  the minimum of all the values $\beta_k$ and $n'_0$ be the maximum of all the
  values $n_0^{(k)}$ that Lemma~\ref{lem:H} returns for input $k$ and $\xi$
  where $k$ runs from $k_0$ to $K_0$. Finally, we set $n_0:=\max\{n'_0,K_0\}$.
  
  Let~$G=(A\dcup B,E)$ and~$H=(X\dcup Y,F)$ be balanced bipartite graphs on $2n$
  vertices with $n\ge n_0$, $\delta(G)\ge(\frac12+\gamma)n$,
  $\Delta(H)\le\Delta$, and $\bw(H)\le\beta n$. We apply Lemma~\ref{lem:G} to the
  graph $G$ in order to obtain an integer $k$ and an integer partition
  $(n_i)_{i\in[k]}$ with $n_i\ge \frac12 n/k$ for all $i\in[k]$. Next, we apply
  Lemma~\ref{lem:H} to the graph $H$ and the integer partition $(n_i)_{i\in[k]}$
  and get a vertex set $S\subset X\cup Y$ and a homomorphism $f$ from $H$ to the
  cycle $C$ on vertices $A_1,B_2,A_2,\dots B_k,A_k,B_1,A_1$ such that
  \ref{lem:H:1}--\ref{lem:H:3} are satisfied. With this we can define the integer
  partitions $(a_i)_{i\in[k]}$ and $(b_i)_{i\in[k]}$ required for the
  continuation of Lemma~\ref{lem:G}: set $a_i:=|f^{-1}(A_i)|$ and
  $b_i:=|f^{-1}(B_i)|$ for all $i\in[k]$. By~\ref{lem:H:3} we have $a_i\le
  n_i+\xi_\subsc{lh}n\le n_i+\xi_\subsc{lg}n$ and $b_i\le n_i+\xi_\subsc{lg}n$
  for all $i\in[k]$. It follows that Lemma~\ref{lem:G} now gives us vertex
  partitions $A=(A_i)_{i\in[k]}$ and $B=(B_i)_{i\in[k]}$ for $G$ such that
  \ref{lem:G:1}--\ref{lem:G:3} hold. We complement this with vertex partitions
  $X=(X_i)_{i\in[k]}$ and $Y=(Y_i)_{i\in[k]}$ for $H$ defined by
  $X_i:=f^{-1}(A_i)$ and $Y_i:=f^{-1}(B_i)$ and claim that we can use the general
  embedding lemma (Lemma~\ref{lem:gel}) for these vertex partitions of $G$ and
  $H$.
  
  Indeed, first observe that~\ref{lem:G:2} and~\ref{lem:G:3} imply that the
  partition $V(G)=(A_i)_{i\in[k]}\dcup(B_i)_{i\in[k]}$ is
  $(\eps,d)$-regular on the graph $C$. Further, by~\ref{lem:G:3} this partition
  is $(\eps,d)$-super-regular on the graph $R'$ on the same vertices as $C$ and with
  edges $A_iB_i$ for all $i\in[k]$. Notice that the components of $R'$ have size
  $r=2$. It follows that we can apply Lemma~\ref{lem:gel} if the vertex partition
  $V(H)=(X_i)_{i\in[k]}\dcup(Y_i)_{i\in[k]}$ is $\eps$-compatible with the
  partition $V(G)=(A_i)_{i\in[k]}\dcup(B_i)_{i\in[k]}$ and with $R'\subset C$. To
  check this first note that by~\ref{lem:G:1} we have $|A_i|=a_i=|X_i|$
  and $|B_i|=b_i=|Y_i|$ for all $i\in[k]$ and thus Property~\ref{def:comp:0} of an
  $\eps$-compatible partition is satisfied. Since $f$ is a homomorphism from $H$
  to $C$ we also immediately get Property~\ref{def:comp:1} for
  $(X_i)_{i\in[k]}\dcup(Y_i)_{i\in[k]}$. In addition, since $|A_i|=a_i\le
  n_i+\xi_\subsc{lh}n$ for all $i\in[k]$, we also have $|A_i|\ge
  n_i-k\xi_\subsc{lh}n \ge\frac12n/k-k\xi_\subsc{lh}n\ge \Delta\xi_\subsc{lh}2k
  n/\eps$ by the choice of $\xi_\subsc{lh}$. This together with~\ref{lem:H:1}
  implies that $|S\cap A_i|\le\xi_{\subsc{lh}}2k n\le\eps|A_i|$ and  $|N_H(S)\cap
  A_i|\le\Delta|S|\le\Delta\xi_{\subsc{lh}}2k n\le\eps|A_j|$ for all $i,j\in[k]$.
  Similarly we get $|S\cap B_i|\le\eps|B_i|$ and $|N_H(S)\cap B_i|\le\eps|B_j|$
  for all $i,j\in[k]$. This clearly implies Property~\ref{def:comp:2} of an
  $\eps$-compatible partition.
  
  Accordingly we can apply Lemma~\ref{lem:gel} to the graphs $G$ and $H$ with
  their partitions $V(G)=(A_i)_{i\in[k]}\dcup(B_i)_{i\in[k]}$ and
  $V(H)=(X_i)_{i\in[k]}\dcup(Y_i)_{i\in[k]}$, respectively, which implies that
  $H$ is a subgraph of $G$.
\end{proof}


\section{A regular partition of $G$ with a spanning cycle}
\label{sec:G}

In this section we will prove the Lemma for~$G$. This lemma is a consequence of
the regularity lemma (Lemma~\ref{lem:reg}), Theorem~\ref{thm:moon}, and the
following lemma which states that, under certain circumstances, we can adjust
a (super)-regular partition in order to meet a request for slightly differing cluster sizes.

\begin{lemma}\label{lem:adjust} Let $k\ge 1$ be an integer, 
$0 < \xi \leq 1/(20k^2)$ and let $G = (A\dcup B, E)$ be 
a balanced bipartite graph on $2n$ vertices with
partitions $A=A'_1\dcup\dotsm\dcup A'_k$ and $B=B'_1\dcup\dotsm\dcup B'_k$ such
that $|A'_i|, |B'_i|\geq n/(2k)$ and
$(A'_i,B'_i)$ is $(\eps', d')$-super-regular and $(A'_i, B'_{i+1})$ is 
$(\eps',d')$-regular for all $i\in[k]$. 
Let $\parti{a'}$ and $\parti{b'}$
be integers such that $a'_i,b'_i\le\xi n$ 
for all $i\in[k]$ and $\sum_{i\in [k]}a'_i=\sum_{i\in [k]}b'_i = 0$. 
Then there are partitions
$A=A_1\dcup\dotsm\dcup A_k$ and $B=B_1\dcup\dotsm\dcup B_k$ with
$|A_i|=|A'_i|+a'_i$ and $|B_i|=|B'_i|+b'_i$ and such that $(A_i,B_i)$ is
$(\eps,d)$-super-regular and $(A_i,B_{i+1})$ is $(\eps,d)$-regular for
all $i\in[k]$ where 
$\eps:=\eps' + 100 k\sqrt{\xi}$ and 
$d:=d' - 100 k^2 \sqrt{\xi} - \eps'$.
\end{lemma}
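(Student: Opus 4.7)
The strategy is to realise the required cluster size changes by moving a small fraction of vertices between adjacent clusters along the cycle $A_1, B_2, A_2, \dots, A_k, B_1$, and then to invoke Proposition~\ref{prop:moving-vertices} to conclude that (super-)regularity is preserved with only a slight parameter loss.

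The key structural observation is a directional asymmetry. Since the only regular pair connecting $A'_i$ to a $B$-cluster other than $B'_i$ is $(A'_i, B'_{i+1})$, Proposition~\ref{lem:typical} guarantees that almost every vertex $v \in A'_i$ has at least $(d'-\eps')|B'_{i+1}|$ neighbours in $B'_{i+1}$, so such a $v$ can be transplanted from $A'_i$ into $A'_{i+1}$ while preserving super-regularity of the resulting pair $(A_{i+1}, B_{i+1})$. Symmetrically, typical vertices of $B'_{i+1}$ can be moved into $B'_i$. Direct moves in the opposite directions along the cycle are not supported by the available regularity data.

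\emph{Step~1: Flow decomposition.} Since $\sum_{i \in [k]} a'_i = 0$, we can find non-negative integers $(t_i)_{i \in [k]}$ (indices mod $k$) satisfying $t_{i-1} - t_i = a'_i$, where $t_i$ is to be the number of vertices transferred from $A'_i$ to $A'_{i+1}$; e.g.\ taking $t_i := c - \sum_{j \leq i} a'_j$ with $c := \max_i \sum_{j \leq i} a'_j$ yields $0 \leq t_i \leq 2k\xi n$. Analogously, non-negative integers $(s_i)_{i \in [k]}$ with $s_i - s_{i-1} = b'_i$ and $0 \leq s_i \leq 2k\xi n$ will govern the $B$-side moves $B'_{i+1} \to B'_i$.

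\emph{Step~2: Vertex selection and new partition.} For each $i$, choose a set $T_i \subset A'_i$ of exactly $t_i$ vertices, each with at least $(d'-\eps')|B'_{i+1}|$ neighbours in $B'_{i+1}$; by Proposition~\ref{lem:typical} applied to $(A'_i, B'_{i+1})$ only $\eps'|A'_i|$ vertices are disqualified, and $t_i \leq 2k\xi n$ together with $|A'_i| \geq n/(2k)$ and $\xi \leq 1/(20k^2)$ leaves ample room. Analogously pick $S_i \subset B'_{i+1}$ consisting of $s_i$ vertices typical for $A'_i$. Define
\[
  A_i := (A'_i \setminus T_i) \cup T_{i-1}, \qquad B_i := (B'_i \setminus S_{i-1}) \cup S_i.
\]
The circulation equations force $|A_i| = |A'_i| + a'_i$ and $|B_i| = |B'_i| + b'_i$, and the symmetric differences satisfy $|A_i \triangle A'_i|,\, |B_i \triangle B'_i| \leq 4k\xi n$, so in the notation of Proposition~\ref{prop:moving-vertices} one has $\alpha, \beta = O(k^2 \xi)$.

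\emph{Step~3: Applying Proposition~\ref{prop:moving-vertices}.} Regularity of each new pair $(A_i, B_i)$ and $(A_i, B_{i+1})$ with parameters bounded by $\eps' + 3(\sqrt{\alpha} + \sqrt{\beta}) \leq \eps' + 100 k \sqrt{\xi}$ is immediate. To promote $(A_i, B_i)$ to super-regular we verify that every vertex of $A_i$ has at least $d|B_i|$ neighbours in $B_i$: an unmoved $v \in A'_i \setminus T_i$ retains at least $d'|B'_i| - |B'_i \setminus B_i|$ such neighbours, while a moved-in $v \in T_{i-1}$ retains at least $(d'-\eps')|B'_i| - |B'_i \setminus B_i|$; both bounds comfortably exceed $d|B_i|$ for $d := d' - 100 k^2 \sqrt{\xi} - \eps'$ once $|B'_i \setminus B_i| \leq 2k\xi n$ is absorbed. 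The symmetric check for $B_i$ vertices uses the typicality of $S_{i-1}$ with respect to $A'_{i-1}$, and Proposition~\ref{prop:moving-vertices} then delivers the required super-regularity.

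The main obstacle is the directional asymmetry: backward moves such as $A'_{i+1} \to A'_i$ are not directly supported. This is resolved by the observation that any zero-sum adjustment on a cycle can be realised by a non-negative circulation, costing only $O(k \xi n)$ vertex moves per edge, which fits within the typical-vertex budget thanks to the hypothesis $\xi \leq 1/(20k^2)$.
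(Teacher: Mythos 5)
Your proof is correct and follows essentially the same route as the paper's: vertices typical for the available regular pairs are pushed forward along the cycle on the $A$-side and backward on the $B$-side, the total flux through any cluster is $O(k\xi n)$, and Propositions~\ref{lem:typical} and~\ref{prop:moving-vertices} yield the final parameters. The only differences are presentational: you compute the circulation $(t_i)$ explicitly and move all vertices in one batch (the paper runs an iterative source-to-sink pushing algorithm that implicitly computes the same circulation), and note that your direct verification of the degree condition against the final parameter $d$ is indeed what is needed -- the super-regularity clause of Proposition~\ref{prop:moving-vertices} itself could not be invoked here, since its hypothesis requires degrees at least $d'|B_i|$ with the original density.
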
 
\begin{proof} 
The lemma will be proved by
performing a simple redistribution algorithm that will iteratively adjust the cluster 
sizes. Throughout the process, we denote by $A_i$ and $B_i$ the 
changing clusters, beginning with $A_i:=A'_i$ and $B_i:=B'_i$.
We call $A_i$ a \emph{sink} when $|A_i|<|A'_i|+a'_i$, and 
a \emph{source} when $|A_i|>|A'_i|+a'_i$, and analogously for $B'_i$.
Each iteration of the algorithm will have the effect that the number 
of vertices in a single source decreases by one, the number of 
vertices in a single sink increases by one, and all other cluster 
cardinalities stay the same. 

We start by describing one iteration of the algorithm. Obviously, 
as long as not every cluster in $A$ has exactly the desired size, 
there is at least one source. We choose an arbitrary source 
$A_i$, and, as will be further explained below, the regularity 
of the pair $(A_i,B_{i+1})$ implies that within $A_i$ there 
is a large set of vertices each of which 
can be added to the neighbouring 
cluster $A_{i+1}$ while preserving  the super-regularity of 
the pair $(A_{i+1},B_{i+1})$. We do this with one arbitrary 
vertex from this set. Thereafter, within $A_{i+1}$ there is 
again a large set of vertices (the newly arrived vertex may 
or may not be one of them) suitable for being moved into $A_{i+2}$ 
while preserving the super-regularity of the pair 
$(A_{i+2},B_{i+2})$, and we again do this with one arbitrary 
vertex from this set. We then continue in this way until for 
the first time we move a vertex into a sink.
(It may happen that it is not the vertex we initially took out of 
$A_i$ that arrives in the sink.) This is the end of the iteration.

We repeat such iterations as long as there are sources, i.e. we 
choose an arbitrary source 
and repeat what we have just described. 
Since each iteration ends with adding a vertex to a sink 
while not changing the cardinality of the clusters visited along 
the way, we do not increase the number of vertices in any source,  
let alone create a new source, and hence after a finite number 
of iterations (which we will estimate below) the algorithm ends with 
no sources remaining and therefore all clusters within $A$ having exactly 
the desired size.

We then repeat what we have just described for the 
clusters within $B$, the only difference being that 
vertices get moved from $B_i$ into $B_{i-1}$, 
not $B_{i+1}$, since only in this direction a regular 
pair can be used ($(A_{i-1},B_i)$ is regular, $(A_{i+1},B_i)$ need not be
regular).

We now analyse the algorithm quantitatively. Clearly, the total 
number of iterations (we call it $t$) is at most the 
sum of all positive $a'_i$ and all positive $b'_i$. Obviously, 
both the sum of all positive $a'_i$ and the sum of all positive 
$b'_i$ is bounded from above by $\frac{1}{2} k \xi n$, hence 
\begin{equation}\label{eq:totalNumberOfChanges} 
t\leq \tfrac{1}{2} k \xi n+\tfrac{1}{2} k \xi n = k\xi n. 
\end{equation} 

We will now use this bound together with 
Proposition~\ref{prop:moving-vertices} to estimate the 
effect of the redistribution on the regularity 
and density parameters. Since in each iteration 
each cluster receives at most one vertex and 
loses at most one vertex, for every $i\in [k]$ and 
after any step of the algorithm, we have 
\[ |A_i\Delta A'_i| \leq 2 t \leq 2 k \xi n\,,\] 
and analogously $|B_i\Delta B'_i|\leq 2 k \xi n$. 
We now invoke Proposition~\ref{prop:moving-vertices} on the pairs 
$(A_i,B_i)$ and $(A_i,B_{i+1})$, once with 
$\sh{A}:=A_i$, $\sh{B}:=B_i$ then with 
$\sh{A}:=A_i$, $\sh{B}:=B_{i+1}$ and 
we claim that we may use $\alpha := \beta := 16 k^2 \xi$. 
Indeed, we have $|A_i|\geq |A'_i| - t \geq n/(2k)-2k\xi n$ and 
because $\xi \leq 1/(20k^2)$ implies $2k\xi n \leq 5 k \xi n - 20k^3\xi^2 n$, 
hence $|A_i\Delta A'_i| \leq 2 k \xi n \leq (5k\xi - 20k^3\xi^2)n 
= 10 k^2\xi (n/(2k) - 2k\xi n) \leq \alpha |A_i'|$, and analogously 
$|B_i\Delta B'_i|\leq \beta |B_i'|$. 
By Proposition~\ref{prop:moving-vertices}, every pair $(A_i,B_i)$ and
$(A_i,B_{i+1})$ is $\bigl( \sh{\eps}, \sh{d} \bigr )$-regular 
with $\sh{\eps}:=\eps' + 24k\sqrt{\xi}$ and $\sh{d}:=d'-64k^2\xi$, 
hence $\sh{\eps}\leq \eps$ and $\sh{d}\geq d$, proving the parameters 
claimed in the lemma, as far as mere regularity goes.

As for the claimed super-regularity of the vertical pairs, 
let $A_i$, $B_i$ and $B_{i+1}$ be clusters at an 
arbitrary step of the algorithm. 
Using Proposition~\ref{lem:typical} and \eqref{eq:totalNumberOfChanges} we 
know that the pairs $(A_i,B_i)$ and $(A_i,B_{i+1})$ being 
$(\sh{\eps},\sh{d})$-regular implies that there are at 
least $(1 - \sh{\eps})|A_i|$ vertices 
in $A_i$ having at least 
$(\sh{d}-\sh{\eps})|B_{i+1}|-t\geq 
(\sh{d}-\sh{\eps})|B_{i+1}| - 2k\xi n$ 
neighbours in $B_{i+1}$, and it remains to 
prove that
$(\sh{d}-\sh{\eps})|B_{i+1}| - 2k\xi n \geq d|B_{i+1}|$ 
which is equivalent to 
$2k\xi n / |B_{i+1}| \leq 100 k^2\sqrt{\xi}-64k^2\xi-24k\xi$. 
Because of $2k\xi n / |B_{i+1}| \leq 2k\xi n/(|B'_{i+1}|-t)
\leq 2k\xi n/(n/2k - 2k\xi n) = 4k^2\xi/(1-4k^2\xi)$ it is 
therefore sufficient that 
$4k^2\xi/(1-4k^2\xi)\leq 100k^2\sqrt{\xi}-64k^2\xi-24k\sqrt{\xi}$ 
and it is easy to check that this is true by the hypothesis 
on~$\xi$.
\end{proof}

Now we will prove Lemma \ref{lem:G}.
To this end we will apply Lemma~\ref{lem:reg} to the input graph~$G$.
By~\ref{lem:reg:a} and~\ref{lem:reg:b} of Lemma~\ref{lem:reg} we obtain a
regular partition with a bipartite reduced graph~$R$ of high minimum degree.
Theorem~\ref{thm:moon} then guarantees the existence of a Hamilton cycle
in~$R$ which will imply property~\ref{lem:G:3}.
This Hamilton cycle serves as~$R^*$ in Lemma~\ref{lem:reg}\ref{lem:reg:c}
which promises a regular partition of~$G$ that is super-regular
on~$R^*$. For finishing the proof we will use a greedy strategy for distributing the
vertices into the exceptional sets over the clusters of this partition (without
destroying the super-regularity required for~\ref{lem:G:2}) and then apply
Lemma~\ref{lem:adjust} to adjust the cluster sizes as needed for~\ref{lem:G:1}.

\begin{proof}[Proof of Lemma~\ref{lem:G}]
Let $\gamma > 0$ given. We assume without loss of generality 
that $\gamma<1/20$ and set $d_{\subsc{lg}}:=\gamma^2/100$. 
Now let $\eps>0$ and $k_0\in\Nat$ be given.
We assume that $\eps\le\gamma^2/1000$, since otherwise we can set
$\eps:=\gamma^2/1000$, prove the lemma, and all statements will still hold for any
larger $\eps$.

Our next task is to choose $\eps'$ and $d'$. 
For this, consider the following functions in $\eps'$ and $d'$:
\begin{equation}\label{lem:G:eps}
\begin{aligned}
\eps'' &:= \frac{\eps'}{1-2\eps'}\,, &\qquad
\sh{\eps} &:= \eps'' + 6\sqrt{\eps''/\gamma(1-\eps'')}\,,
\\ d'' &:= d'-4\eps'\,, &
\sh{d} &:= d'' - 4\eps''/\gamma(1-\eps'')\,.
\end{aligned}
\end{equation}
Observe that
$$
\eps' \ll \eps'' \ll \sh{\eps}\, \qquad\text{and}\qquad
\sh{d} \ll d'' \ll d'\,,
$$
by which we mean, for example, that $\eps'\le \eps''$ but that we can make 
$\eps''$ arbitrarily small by choosing $\eps'$ sufficiently small. 
Keeping in mind that $\gamma <1/20$, it is easy to check that when setting 
$\eps':=\eps^3\gamma^3$ and $d':=\eps+\gamma^2$, the following inequalities are 
all satisfied:
\begin{gather}
\label{eq:lg1}
  \sh{\eps} \le\tfrac1{10}\eps\,, \qquad
  \sh{d} - \eps \ge 2d_{\subsc{lg}}\,, \qquad
  \gamma-d'-\eps'' > 0\, \\
\label{eq:lg2}
  (\tfrac12+\gamma-\eps'')(1-d'')^{-1} \ge \tfrac12  + \tfrac23 \gamma\,,
  \qquad {d''}(1-{d''})^{-1} \le \tfrac16 \gamma\,.
\end{gather}
Next, using \eqref{eq:lg1}, we can choose an integer $k'_0$ with $k_0\le k'_0$
such that for all 
integers $k$ with $k'_0\le k$ we have
\begin{equation}
\label{eq:lg6}
(\gamma-d'-\eps'')k \ge 1\,.
\end{equation}

Apply Lemma~\ref{lem:reg} with $\eps'$, $\Delta:=2$, and with $k_0$
replaced by $k'_0$, to obtain $K_0$. Choose $\xi_{\subsc{lg}}>0$ such that
\begin{equation}
\label{eq:lg7}
100 K_0 \sqrt{\xi_{\subsc{lg}}} \le \tfrac1{10}{\eps}, \quad
100 (K_0)^2 \sqrt{\xi_{\subsc{lg}}} \le d_{\subsc{lg}}.
\end{equation}
Now let $G$ be given. Feed $d'$ and $G$ into Lemma~\ref{lem:reg} and obtain 
$k\in\Nat$ with $k_0 \le k'_0\le k \le K_0$ 
together with an equipartition of $G$ into $2k+2$ classes and an 
$(\eps',d')$-reduced graph $R$ on $2k$ vertices by~\ref{lem:reg:a} of
Lemma~\ref{lem:reg}. By assumption $\delta(G)\ge (\frac12 +\gamma)n$, so setting
$\nu:= 1/2+\gamma$ and making use of part~\ref{lem:reg:b} of Lemma~\ref{lem:reg}, we get
$$
\delta(R) \ge (\tfrac12 + \gamma - d'-\eps'')|V(R)| 
= \tfrac12 |V(R)| + (\gamma-d'-\eps'')k 
\geByRef{eq:lg6} \tfrac12 |V(R)| + 1.
$$ 
We infer from Theorem~\ref{thm:moon} that~$R$ contains a Hamilton cycle~$R^*$. 
Now apply part~\ref{lem:reg:c} of Lemma~\ref{lem:reg} and obtain an 
equipartition of $G$ which is
$(\eps'',d'')$-regular on $R$, $(\eps'',d'')$-super-regular on $R^*$,
and has classes
$$
A= A''_0 \dcup \dots \dcup A''_k 
\quad \text{and} \quad
B= B''_0 \dcup \dots \dcup B''_k. 
$$
Obviously, $R$ and thus $R^*$ are bipartite and so,
without loss of generality
(renumbering the clusters if necessary), we can assume that the Hamilton
cycle $R^*$ consists of the vertices representing the classes
$$
A''_1,B''_2,A''_2,B''_3,\dots,B''_k,A''_k,B''_1,A''_1
$$
with edges in this order. Therefore, we know that
the pairs $(A''_i,B''_i)$ and $(A''_i,B''_{i+1})$
are $(\eps'',d'')$-super-regular for all $i\in [k]$. 
Let $L:=|A''_i|=|B''_i|$
and observe that 
$$
(1-\eps'')\frac{n}{k} \le L \le \frac{n}{k}\,.
$$

Our next aim is to get rid of the classes $A''_0$ and $B''_0$
by moving their vertices to other classes. 
We will do this, roughly speaking, as follows.
When moving a vertex $x\in A''_0$ to some class $A''_i$, say,
we will move an arbitrary vertex $y\in B''_0$ to the corresponding class $B''_i$
at the same time. We will also make sure that $x$ has at least $d'' |B''_i|$
neighbours in $B''_i$ and $y$ has at least $d'' |A''_i|$ neighbours in
$A''_i$.
Here are the details for this procedure.
For an arbitrary pair $(x,y)\in A''_0\times B''_0$ we define
\begin{equation*} 
I_{}(x,y) := \Big\{ i\in [ k ]\colon\quad |N_G (x)\cap B''_i| \geq d'' \; 
|B''_i|\quad\text{and}\quad|N_G (y)\cap A''_i| \geq d'' \; |A''_i|\Big\}\,. 
\end{equation*} 
We claim that for every $(a,b)\in A''_0\times B''_0$ we have
$
|I_{}(x,y)| \ge \gamma k
$.
To prove this claim, first recall  that $L=|A''_i|=|B''_i|$ for all $i\in [k]$.
Define 
\begin{align*}
I_{}(x) &:= \big\{ i\in [ k ]\colon |N_G (x)\cap B''_i| \geq d''  |B''_i|
\big\}\,, \\
I_{}(y) &:= \big\{ i\in [ k ]\colon |N_G (y)\cap A''_i| \geq d''  |A''_i|
\big\}\,.
\end{align*}
As $|A''_0|=|B''_0| \le\eps'' n$ we have
\begin{align*}
(\tfrac12 + \gamma)n 
&\,\le\, \deg_G(x) 
 \,\le\, |I_{}(x)| L + (k-|I_{}(x)|) \,d'' L + \eps'' n \\
&\,=\, |I_{}(x)| (1-d'') L + k d'' L + \eps'' n\,. 
\end{align*}
and hence
\begin{align*} 
|I_{}(x)| 
&\ge \frac{(\frac12+\gamma)n -kd'' L - \eps'' n}{(1-d'')L} 
= \frac{(\frac12+\gamma-\eps'')}{1-d''} \frac{n}{L} 
  -\frac{d''}{1-d''} \,k \\ 
&\geByRef{eq:lg2} (\tfrac12 + \tfrac23\gamma)k -
  \tfrac16\gamma k = (\tfrac12 + \tfrac12\gamma)k \,.
\end{align*}
Similarly, $|I_{}(y)| \ge (\frac12 + \frac12\gamma)k$. 
Since $I_{}(x)$ and $I_{}(y)$ are both subsets of $[k]$, this implies that
$
|I(x,y)|= |I_{}(x) \cap I_{}(y)| \ge \gamma k
$,
which proves the claim. 

We group the vertices in $A''_0 \cup B''_0$ into (at most $\eps'' n$) pairs
$(x,y)\in A''_0\times B''_0$ and 
choose an index $i\in I(x,y)$ which has the property that $(A''_i,B''_i)$ has so
far received a minimal number of additional vertices.
Then we move $x$ into $A''_i$ and $y$ into $B''_i$.
Hence, at the end, every cluster $A''_i$, or $B''_i$ gains at most 
$\eps'' n / (\gamma k)$ additional vertices.
Denote the final partition obtained in this way by 
$$
A \dcup B = \sh{A}_1 \dcup \sh{B}_1 \dcup \dots \dcup \sh{A}_k \dcup \sh{B}_k\,.
$$
Set $\alpha:=\beta:=\eps''/\gamma(1-\eps'')$ and observe that 
$$
\frac{\eps'' n}{\gamma k} = \alpha (1-\eps'')\frac{n}{k} \le \alpha L\,.
$$
So Proposition~\ref{prop:moving-vertices} tells us that for all $i\in [k]$ the
pairs $(\sh{A}_i,\sh{B}_i)$ are still 
$(\sh{\eps},\sh{d})$-super-regular and the pairs 
$(\sh{A}_i,\sh{B}_{i+1})$ are still $(\sh{\eps},\sh{d})$-regular,
because
\begin{align*}
  \sh{\eps}
    &\eqByRef{lem:G:eps}\eps'' + 6\sqrt{\eps''/\gamma(1-\eps'')}
    =\eps''+3(\sqrt{\alpha}+\sqrt{\beta}) 
  \qquad\text{and} \\
  \sh{d}
    &\eqByRef{lem:G:eps} d'' - 4\eps''/\gamma(1-\eps'')
    = d'' -4\alpha 
    = d'' -2(\alpha+\beta) \,.
\end{align*}
Now we return to the statement of Lemma~\ref{lem:G}. 
We set $n_i:=|\sh{A_i}|=|\sh{B_i}|$ for all $i\in [k]$.
Let $\parti{a}$ and $\parti{b}$ be given and set $a''_i:= a_i- n_i$ and
$b''_i:= b_i- n_i$. Then
$$
a''_i \le \xi_\subsc{lg}n, \quad 
b''_i \le \xi_\subsc{lg}n, \quad
\sum_{i\in[k]} a''_i = \sum_{i\in[k]} a_i - \sum_{i\in[k]} n_i = n-n =0 =
\sum_{i\in[k]} b''_i\,. $$
Therefore we can apply Lemma~\ref{lem:adjust} with parameter~$\xi_\subsc{lg}$
to the graph~$G$ with partitions
$\sh{A}_1 \dcup \dots \dcup \sh{A}_k$ and $\sh{B}_1 \dcup \dots \dcup \sh{B}_k$.
Since
\begin{align*}
\sh{\eps} + 100 k \sqrt{\xi_\subsc{lg}} 
  \leBy{\eqref{eq:lg1},\eqref{eq:lg7}}\tfrac{1}{10}\eps+\tfrac{1}{10}\eps
  &\le \eps \qquad\text{and} \\
\sh{d} - 100 k^2 \sqrt{\xi_\subsc{lg}}-\eps 
  \geBy{\eqref{eq:lg1},\eqref{eq:lg7}} 2d_\subsc{lg} - d_\subsc{lg} 
  &= d_\subsc{lg}\,,
\end{align*}
we obtain sets $A_i$ and $B_i$ for each $i\in [k]$ such that
$|A_i| = |\sh{A}_i| + a''_i = n_i + a''_i = a_i$ 
and $|B_i|=b_i$,
and with the property that
$(A_i,B_i)$ is $(\eps,d)$-super-regular and 
$(A_i,B_{i+1})$ is $(\eps,d)$-regular.
This completes the proof of Lemma~\ref{lem:G}.
\end{proof}


\section{Distributing $H$ among the edges of a cycle}
\label{sec:H}

In this section we will provide the proof of the Lemma for~$H$
(Lemma~\ref{lem:H}). The idea is to cut~$H$ into small pieces along its
bandwidth ordering, that is, an ordering of the vertices $H$ that respects
the bandwidth bound. These pieces are then distributed to the edges $A_iB_i$ of
the cycle~$C$ in such a way that the following holds. Let $X_i$ be all the
vertices from $X$, and $Y_i$ all the vertices from $Y$ that were assigned to the
edge $A_iB_i$. Then we require that $X_i$ and $Y_i$ are roughly of size $n_i$.
Observe that this goal would be easy to achieve if~$H$ were \emph{locally
balanced}, i.e., if each of the small pieces had colour classes of equal size.
While this need not be the case, we know, however, that~$H$ itself is a
\emph{balanced} bipartite graph. Therefore we use a probabilistic argument to
show that the pieces of $H$ can be grouped in such a way that the resulting
packages form balanced bipartite subgraphs of $H$. The details of this argument are
given in Section~\ref{subsec:balance}.

After this distribution of the pieces to the edges $A_iB_i$ we will construct
the desired homomorphism~$f$ in the following way. We will map most vertices of
$X_i$ to $A_i$ and most vertices of $Y_i$ to $B_i$. 

\subsection{Balancing $H$ locally}
\label{subsec:balance}

Our goal is to group small pieces $W_1,\dots,W_\ell$
of the balanced bipartite graph $H$ on $2n$ vertices into packages
$P_1,\dots,P_k$ that form balanced bipartite subgraphs of~$H$. This is
equivalent to the following problem. Given the sizes $a_j$ and $b_j$ of the
colour classes of each piece $W_j$ (i.e., $a_j$ counts the
vertices of $W_j$ that are in $X$ and $b_j$ those that are in $Y$) we know
that the $a_j$'s sum up to $n$ and the $b_j$'s sum up to $n$. 
Then we would like to have a mapping $\phi:[\ell]\to[k]$ such that for all
$i\in[k]$ the $a_j$ with $j\in \phi^{-1}(i)$ sum up approximately to the same
value as the $b_j$ with $j\in \phi^{-1}(i)$. The following lemma asserts
that such a mapping $\phi$ exists. The package $P_i$ will then (in the proof of
Lemma~\ref{lem:H}) consist of all pieces $W_j$ with $j\in \phi^{-1}(i)$.

\begin{lemma}\label{lem:num} 
  For all $0 < \xi \leq 1/4$ and all positive integers $k$ 
  there exists $\ell\in\Nat$ such that for all integers $n\ge\ell$ the
  following holds. Let $(n_i)_{i\in [k]}$, $(a_j)_{j\in [\ell]}$, and
  $(b_j)_{j\in [\ell]}$ be integer partitions of $n$ such that $n_i\leq
  \frac{1}{8}n$ and $a_j+b_j \leq (1+\xi)\frac{2n}\ell$ for all $i\in [k]$,
  $j\in[\ell]$. Then there is a map $\phi:[\ell]\to[k]$ such that 
  for all $i\in [k]$ and $\sbar a_i:=\sum_{j\in \phi^{-1}(i)} a_j$ and 
  $\sbar b_i:=\sum_{j\in \phi^{-1}(i)} b_j$ we have
  \begin{equation}
    \label{lem:num:1}
    \sbar a_i < n_i + \xi n 
    \qquad\text{and}\qquad
    \sbar b_i < n_i + \xi n\,.      
  \end{equation}      
\end{lemma}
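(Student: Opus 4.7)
The plan is a direct probabilistic construction of $\phi$. Assign each piece $j \in [\ell]$ independently to a bin $\phi(j) = i$ with probability $p_i := n_i/n$, so that $\sum_{i\in[k]} p_i = 1$. By linearity of expectation $\Exp[\sbar a_i] = p_i \sum_{j \in [\ell]} a_j = p_i n = n_i$ and similarly $\Exp[\sbar b_i] = n_i$, so $n_i$ is the natural target in~\eqref{lem:num:1}.

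For the concentration I would apply Hoeffding's inequality to $\sbar a_i = \sum_{j \in [\ell]} a_j \mathbbm{1}[\phi(j) = i]$, a sum of independent random variables each supported in $[0, a_j]$. The hypothesis $a_j + b_j \leq (1+\xi)(2n/\ell)$ together with $\xi \leq 1/4$ gives the uniform bound $a_j \leq 3n/\ell$, so
\[
  \sum_{j \in [\ell]} a_j^2 \leq \max_{j \in [\ell]} a_j \cdot \sum_{j \in [\ell]} a_j \leq \frac{3n^2}{\ell}.
\]
Hoeffding then yields $\Prob[\sbar a_i \geq n_i + \xi n] \leq \exp(-\tfrac{2}{3} \xi^2 \ell)$, and the identical bound holds for $\sbar b_i$. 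A union bound over the $2k$ events $\{\sbar a_i \geq n_i + \xi n\}$ and $\{\sbar b_i \geq n_i + \xi n\}$, $i \in [k]$, then closes the argument provided $\ell$ is chosen as, say, any sufficiently large multiple of $\log(k{+}1)/\xi^2$; with positive probability the random $\phi$ satisfies all of~\eqref{lem:num:1} simultaneously, and hence some deterministic $\phi$ does too.

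There is no substantive obstacle here: the only calibration is taking $\ell$ large enough for the Hoeffding tail to beat the $2k$-fold union bound, which is routine. I remark that I do not see where the hypothesis $n_i \leq n/8$ is used in this argument; presumably it is included in the statement only because it is convenient for the subsequent application of the lemma in the proof of Lemma~\ref{lem:H}.
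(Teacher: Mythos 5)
Your proof is correct, but it takes a genuinely different (and more direct) route than the paper. The paper uses the same random assignment $\Prob[\phi(j)=i]=n_i/n$, but then controls two auxiliary quantities separately: the bin occupancy $S_i=|\phi^{-1}(i)|$ via a Chernoff bound, and the normalized signed difference $D_i\propto \sbar a_i-\sbar b_i$ via the bounded two-sided Hoeffding bound (Theorem~\ref{thm:hoeff}); it then recovers $\sbar a_i=\tfrac12(\sbar a_i+\sbar b_i)+\tfrac12(\sbar a_i-\sbar b_i)$, bounding the first summand by $(1+\xi)\tfrac{2n}{\ell}|\phi^{-1}(i)|$ using the per-piece size hypothesis. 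It is exactly in the final arithmetic of this recombination that the hypothesis $n_i\le n/8$ (together with $\xi\le 1/4$) is needed, so your closing remark is slightly off: that hypothesis is genuinely used by the paper's argument, even though your argument happens not to need it. You instead apply Hoeffding directly to the weighted sum $\sbar a_i=\sum_j a_j\mathbbm{1}[\phi(j)=i]$, using the sum-of-squared-ranges form with $\sum_j a_j^2\le 3n^2/\ell$; this is sound (each summand is independent and supported in $[0,a_j]$ with $a_j\le 3n/\ell$, and $\Exp\sbar a_i=n_i$), and since the lemma only demands one-sided upper bounds, the union bound over $2k$ events closes the argument for $\ell=\Theta(\log k/\xi^2)$. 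The only cosmetic caveat is that the form of Hoeffding you invoke is not literally Theorem~\ref{thm:hoeff} as stated in the paper (mean-zero variables bounded by $1$), but a routine normalization $X_j:=\tfrac{\ell}{3n}\bigl(a_j\mathbbm{1}[\phi(j)=i]-a_j\tfrac{n_i}{n}\bigr)$ reduces your step to that statement at the cost of a worse constant in the exponent. Net effect: your proof is shorter, yields a much smaller $\ell$ (logarithmic rather than polynomial in $k$), and shows the hypothesis $n_i\le n/8$ is dispensable for this lemma, whereas the paper's two-quantity decomposition additionally controls $|\phi^{-1}(i)|$, which it does not ultimately need elsewhere.
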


In the proof of Lemma~\ref{lem:num} we will use a Chernoff bound and
the following formulation of a concentration bound due to Hoeffding.

\begin{theorem}[Hoeffding bound~{\cite[Theorem~A.1.16]{TheProbMethod}}]
\label{thm:hoeff}
  Let $X_1,\dots,X_s$ be  independent random variables with $\Exp X_i=0$ and
  $|X_i|\le 1$ for all $i\in[s]$ and let $X$ be their sum. Then
  $\Prob[|X|\ge a]\le2\exp(-a^2/(2s))$.
\qed
\end{theorem}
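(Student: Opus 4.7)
The plan is to follow the standard Chernoff exponential moment method, since the bound claimed is the classical Hoeffding inequality for symmetric $[-1,1]$-bounded mean-zero summands. First I would apply Markov's inequality to $e^{tX}$ for an arbitrary parameter $t>0$: since $x\mapsto e^{tx}$ is monotone, $\Prob[X\ge a]=\Prob[e^{tX}\ge e^{ta}]\le e^{-ta}\Exp[e^{tX}]$. By independence of the $X_i$, the moment generating function factorises as $\Exp[e^{tX}]=\prod_{i=1}^{s}\Exp[e^{tX_i}]$, reducing the task to a one-variable MGF bound.

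The crux of the argument is proving $\Exp[e^{tX_i}]\le e^{t^2/2}$ for each $i$. Since $X_i\in[-1,1]$, convexity of $e^{tx}$ on this interval yields the linear upper bound $e^{tx}\le\frac{1-x}{2}e^{-t}+\frac{1+x}{2}e^{t}$; taking expectations and using $\Exp X_i=0$ gives $\Exp[e^{tX_i}]\le\cosh(t)$. A term-by-term comparison of Taylor series then establishes $\cosh(t)=\sum_{k\ge 0}\frac{t^{2k}}{(2k)!}\le\sum_{k\ge 0}\frac{(t^2/2)^k}{k!}=e^{t^2/2}$, because $(2k)!\ge 2^k k!$.

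Combining these steps I would obtain $\Prob[X\ge a]\le \exp(st^2/2-ta)$ valid for every $t>0$. Choosing $t:=a/s$ minimises the right-hand side and gives $\Prob[X\ge a]\le\exp(-a^2/(2s))$. Applying the identical argument to the variables $-X_i$ (which again satisfy $\Exp(-X_i)=0$ and $|-X_i|\le 1$) yields the matching bound $\Prob[X\le -a]\le\exp(-a^2/(2s))$, and a union bound over the two tails produces the factor $2$ in the stated conclusion.

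The main obstacle is the single-variable MGF bound; everything else is bookkeeping. The convexity step is clean, but one has to be careful that the coefficient $1/2$ in the exponent comes out correctly, which is why the $\cosh(t)\le e^{t^2/2}$ comparison (rather than a cruder bound like $e^{|t|}$) is essential. Once that lemma is in hand, the tensorisation via independence and the optimisation over $t$ are routine.
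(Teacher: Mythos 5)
Your argument is correct and is precisely the standard Chernoff--Hoeffding proof. The paper itself does not prove this theorem; it cites it directly from Alon and Spencer's \emph{The Probabilistic Method} (Theorem~A.1.16), where the argument is essentially the one you gave: exponential Markov, tensorisation of the moment generating function by independence, the convexity bound $\Exp[e^{tX_i}]\le\cosh(t)\le e^{t^2/2}$, and optimisation at $t=a/s$, with the two-sided bound following from applying the one-sided estimate to $-X$.
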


\begin{proof}[Proof of Lemma~\ref{lem:num}]
  For the proof of this lemma we use a probabilistic argument and 
  show that under a suitable probability distribution a
  random map satisfies the desired properties with positive probability.
  For this purpose set $\ell := \bigl \lceil 1000 k^5 / \xi^2 \bigr
  \rceil $ and construct a random map $\phi\colon  [\ell] \rightarrow [k]$ by
  choosing $\phi(j)=i$ with probability $n_i /n$ for $i\in [k]$,
  independently for each $j\in [\ell]$. To show that this map
  satisfies~\eqref{lem:num:1} with positive probability
  we first estimate the sum of all $a_j$'s and $b_j$'s assigned to a fixed
  $i\in[k]$. To this end, let $\mathbbm 1_j$ be the indicator variable for the
  event $\phi(j)=i$ and define a random variable $S_i:=\sum_{j \in [\ell]}\mathbbm
  1_j$. Clearly $S_i$ is binomially distributed, we have $\Exp
  S_i=\ell\frac{n_i}n$, and by the Chernoff bound $\Prob[|S_i|\ge\Exp
  S_i+t]\le2\exp(-2t^2/\ell)$ (cf.~\cite[Remark~2.5]{purpleBook}) we get
  \begin{equation*}
    \Prob\Big[\big|S_i-\ell\frac{n_i}n\big|\ge\tfrac12\xi\ell\Big]
    \le2\exp(-\tfrac12\xi^2\ell).
  \end{equation*}
  Next, we examine the difference between the sum of the $a_j$'s assigned to
  $i$ and the sum of the $b_j$'s assigned to $i$. We define random variables
  $D_{i,j}:=\frac\ell{3n}(a_j -  b_j)(\mathbbm 1_j-\frac{n_i}n)$ and set $D_i
  :=\sum_{j\in[\ell]}D_{i,j}$. Then $\Exp D_{i,j}=0$ and as
  $a_j+b_j\le\frac{3n}{\ell}$ we have $|D_{i,j}|\le 1$. Thus
  Theorem~\ref{thm:hoeff} implies
  \begin{equation*}
    \Prob\big[|D_i|\ge\tfrac16\xi\ell\big] \le2\exp(-\tfrac1{72}\xi^2\ell).
  \end{equation*}
  By the union bound, the probability that we have 
  \begin{equation}\label{eq:num:good}
    |S_i-\ell\tfrac{n_i}n|<\tfrac12\xi\ell
    \qquad\text{and}\qquad  |D_i|<\tfrac16\xi\ell 
    \qquad\text{for all $i\in[k]$}
  \end{equation}
  is therefore at least $1-k\cdot2\exp(-\tfrac12\xi^2\ell)-
  k\cdot2\exp(-\tfrac1{72}\xi^2\ell)$ which is strictly greater than $0$ by our
  choice of $\ell$.
  Therefore there exists a map $\phi$ with~\eqref{eq:num:good}. We 
  claim that this map satisfies~\eqref{lem:num:1}. To see this, observe first
  that $\frac{3n}\ell D_i=\sum_{j\in\phi^{-1}(i)}(a_j-b_j)=\sbar a_i-\sbar b_i$
  which together with~\eqref{eq:num:good} implies $\sbar a_i - \sbar b_i < \xi n$.
  Moreover, we have $S_i=|\phi^{-1}(i)|$ and
  \begin{equation*}\begin{split} 
    \sbar a_i &= \tfrac{1}{2}(\sbar a_i+\sbar b_i) + \tfrac12(\sbar a_i-\sbar
    b_i) \le\tfrac{1}{2}(1+\xi)\tfrac{2n}\ell|\phi^{-1}(i)|
      +\tfrac12\cdot\tfrac12\xi n \\
    & \lByRef{eq:num:good}
      \frac{1}{2}(1+\xi)\frac{2n}\ell
        \Big(\ell\frac{n_i}n+\frac12\xi\ell\Big) 
       +\frac14\xi n 
    \le n_i+\xi n
  \end{split}\end{equation*}
  where the last inequality follows from  $\xi \leq \frac{1}{4}$ and $n_i\leq
  \frac{1}{8}n$. Since an entirely analogous calculation shows that $\sbar b_i 
  < n_i + \xi n$, this completes the proof of~\eqref{lem:num:1}.
\end{proof}

\subsection{The proof of the Lemma for~$H$}
\label{subsec:H}

For the proof of Lemma~$H$ we will now use Lemma~\ref{lem:num} as outlined in the
beginning of Section~\ref{subsec:balance}. In this way we obtain an assignment of
pieces $W_1,\dots\,W_\ell$ of~$H$ to edges $A_iB_i$ of $C$. This assignment,
however, does not readily give a homomorphism from~$H$ to~$C$ as there might be
edges between pieces $W_j$ and $W_{j+1}$ that end up on edges $A_iB_i$ and
$A_{i'}B_{i'}$ which are not neighbouring in~$C$. Nevertheless (owing to the
small bandwidth of~$H$) we will be able to transform it into a homomorphism by
assigning some few vertices of $W_{j+1}$ to other vertices of $C$ along the path
between $A_iB_i$ and $A_{i'}B_{i'}$ in~$C$.

\begin{proof}[Proof of Lemma~\ref{lem:H}]
Let $k$ and $\xi$ be given. Give $\xi' := \xi/4$ and $k$ 
to Lemma \ref{lem:num}, get $\ell$, set $\beta := \xi'/(4\ell k)$ 
and $n_0 := \lceil \ell /(2\xi) \rceil$, and let $H$ and $(n_i)_{i\in [k]}$ 
be given as in the statement of the lemma for~$H$. 

We assume that the vertices of $H$ are given a bandwidth labelling, 
partition $V(H)$ along this labelling into $\ell$ sets 
$W_1,\dotsc, W_\ell$ of as equal sizes as possible and 
define $x_i := |W_i\cap X|$ and $y_i := |W_i\cap Y|$. Then 
$x_i + y_i = |W_i|\leq \lceil 2n/\ell \rceil 
\leq 2n/\ell + 1 \leq (1+\xi) 2n/\ell$ and since $n_i\leq n/8$ by hypothesis 
we can give $(n_i)_{i\in [k]}$, $(x_i)_{i\in [\ell]}$ and 
$(y_i)_{i\in [\ell]}$ to Lemma \ref{lem:num} and 
get a $\phi\colon [\ell]\rightarrow [k]$ with~\eqref{lem:num:1}.

Let us discuss the main difficulty in our proof. Since the map $\phi$ 
is obtained via the probabilistic method, there is no control over how 
far apart in the Hamilton cycle $C$ two sets $W_{\varphi(i-1)}$ 
and $W_{\varphi(i)}$ will be assigned by $\varphi$. Hence
these sets might end up in non-adjacent vertices of the cycle~$C$.
If there are edges between $W_{\varphi(i-1)}$ and $W_{\varphi(i)}$ we
need to guarantee, however, that these edges are mapped to edges of~$C$ in
order to obtain the desired homomorphism~$f$.
Therefore, we resort to a greedy linking process which robs the pieces $W_i$ of
a small number of vertices. These are then distributed over the clusters lying
between the cluster pair $A_{\varphi(i-1)},B_{\varphi(i-1)}$ and the cluster
pair $A_{\varphi(i)},B_{\varphi(i)}$ such that the corresponding edges of~$H$
are placed on edges of~$C$.


Let $w_i$ be the first vertex in $W_i$ and define sets of \emph{linking vertices} by 
\[ L_j^i := [w_i+(j-1)\beta n, w_i+j\beta n)\subset W_i \] 
for every $j\in [2k]$ , and set $L^i := \bigcup_{j\in [2k]} L_j^i$. 
Then all $L_j^i$ have the common cardinality $\beta n$ and $|L^i|=2k\beta n$.
Since $\beta \leq 1/(4k\ell)$ implies that 
$2k\beta n + \beta n \leq \lfloor 2n/\ell \rfloor \leq |W_i|$ for every $i\in [\ell]$, 
we have $L^i\subsetneq W_i$ for every $i\in [\ell]$ where 
$|W_i\backslash L^i| \geq \beta n$, i.e., at the end of every 
set $W_i$ there are at least $\beta n$ non-linking vertices (see the left hand
side of Figure~\ref{fig:link}).


We now construct a map 
$f\colon V(H)\to\{A_1,\dotsc, A_k, B_1,\dotsc,B_k\}$ 
by defining, for every $i\in [\ell]$, 
\begin{equation}
f(x) :=
\begin{cases}
A_{\varphi(i-1) + \lfloor j/2 \rfloor } & \text{if $x\in L_j^i$ with $j\in \bigl [ 2\cdot \bigl( (\varphi(i)-\varphi(i-1)) \mod k \bigr)\bigr ]$,}\\
A_{\varphi(i)} & \text{else,}
\end{cases}
\end{equation}
for every $x\in W_i\cap X$, and
\begin{equation}
f(y) :=
\begin{cases}
B_{\varphi(i-1) + \lceil j/2 \rceil } & \text{if $y\in L_j^i$ with $j\in \bigl [ 2\cdot \bigl( (\varphi(i)-\varphi(i-1)) \mod k \bigr)\bigr ]$,}\\
B_{\varphi(i)} & \text{else,}
\end{cases}
\end{equation}
for every $y\in W_i\cap Y$, and show that this is indeed a homomorphism (see
also Figure~\ref{fig:link}). To do this, it is convenient to note that a set
$\{A_{i}, B_{i'} \}$ is an edge of $C$ if and only if $0 \leq i' - i \leq 1$.

\begin{figure}[ht]
\input{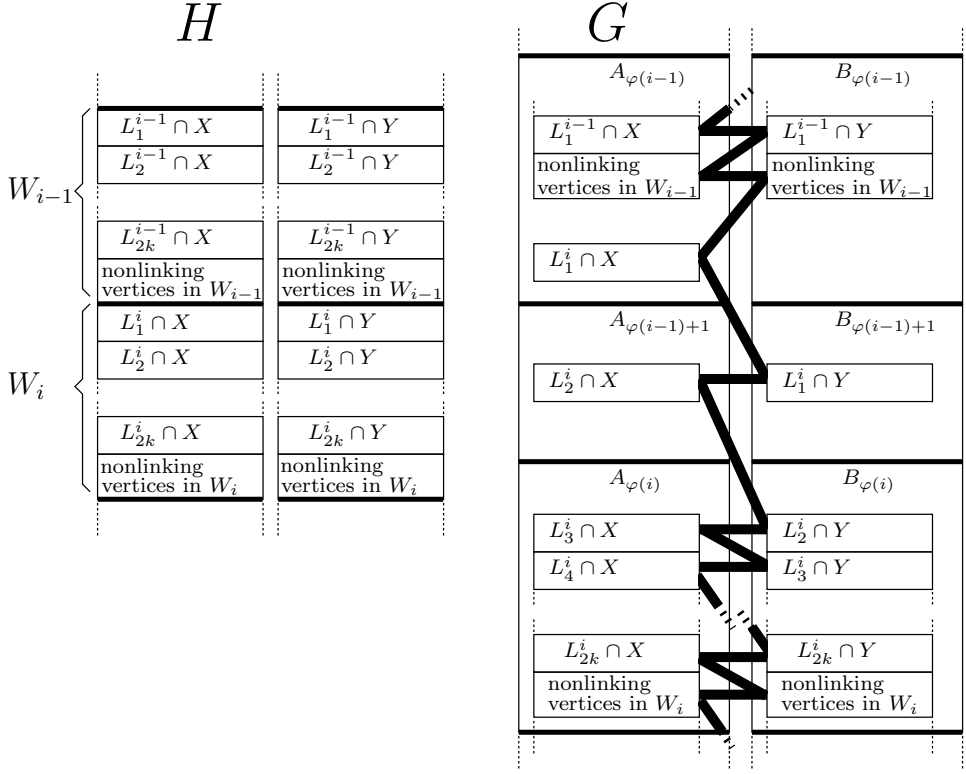}
\caption{The linking procedure.}
\label{fig:link}
\end{figure}

Let arbitrary vertices $x\in X$ and $y\in Y$ with $\{x,y\}\in F$ be given. 
Since the sets $W_i$ are defined along the bandwidth labelling, 
either $x$ and $y$ are both within the same $W_i$, or $x$ and $y$ lie 
in consecutive sets $W_i$ and $W_{i+1}$. We will now distinguish several 
cases. For brevity let 
$I_i :=  \bigl [ 2\cdot \bigl( (\varphi(i)-\varphi(i-1)) \mod k \bigr)\bigr ]$.

\smallskip

{\sl Case 1.}
Both $x$ and $y$ lie within the same set $W_i$.\\
{\sl Case 1.1.}
There is $j\in I_i$ with $x\in L_j^i$, hence $f(x) = A_{\varphi(i-1)+\lfloor j/2 \rfloor}$. 
Due to the bandwidth condition together with $|L_j^i|=\beta n$, if $y\notin L_j^i$ and $j+1\in I_i$, 
then necessarily $y\in L_{j+1}^i$, which explains the following three sub-cases.\\ 
{\sl Case 1.1.1.}
We have $y\in L_j^i$, hence $f(y) = B_{\varphi(i-1)+\lceil j/2 \rceil}$, hence 
the difference of the indices of $f(x)$ and $f(y)$ is $\lceil j/2 \rceil - \lfloor j/2 \rfloor$, which 
is either $0$ or $1$ according to whether $j$ is even or odd, hence $\{f(x),f(y)\}\in E(C)$.\\
{\sl Case 1.1.2.}
We have $y\notin L_j^i$ and $j+1\in I_i$, hence $y\in L_{j+1}^i$, hence 
$f(y) = \varphi(i-1)+\lceil (j+1)/2 \rceil$, hence the difference of indices 
of $f(y)$ and $f(x)$ is $\lceil (j+1)/2 \rceil - \lfloor j/2 \rfloor$, and this is 
always $1$, whether $j$ is even or odd, so $\{f(x),f(y)\}\in E(C)$.\\
{\sl Case 1.1.3.}
We have $y\notin L_j^i$ and $j+1\notin I_i$, 
hence $f(y) = B_{\varphi(i)}$. Here, 
$j+1\notin L_j^i$ implies that $j\geq 2\cdot \bigl( (\varphi(i)-\varphi(i-1)) \mod k \bigr)$ while 
being within Case 1.1 implies $j\in I_i$, hence $j\leq 2\cdot \bigl( (\varphi(i)-\varphi(i-1)) \mod k \bigr)$, 
so we have $j = 2\cdot \bigl( (\varphi(i)-\varphi(i-1)) \mod k \bigr)$, thus 
$f(x) = A_{\varphi(i-1)+\lfloor j/2 \rfloor} = A_{\varphi(i)}$, the index difference 
between $f(y)$ and $f(x)$ is $0$ and $\{f(x),f(y)\}\in E(C)$.\\
{\sl Case 1.2.}
There is no $j\in I_i$ with $x\in L_j^i$, hence $f(x) = A_{\varphi(i)}$. 
Being within Case 1, i.e. $y\in W_i$, it follows that there are exactly two cases.\\
{\sl Case 1.2.1.}
If $y$ precedes $x$ in the bandwidth labelling, then 
$y\in L_{2\cdot q}^i$ with $q=(\varphi(i)-\varphi(i-1)) \mod k$. Hence
$f(y)=B_{\varphi(i)}$, so the index difference between $f(y)$ and $f(x)$ is $0$
and $\{f(x),f(y)\}\in E(C)$.\\ 
{\sl Case 1.2.2.}
If $y$ succeeds $x$ in the bandwidth labelling, then, since $y\in W_i$ by 
being within Case 1, there is no $j\in I_i$ with $y\in I_i$, hence $f(y)=B_{\varphi(i)}$, 
so again the index difference between $f(y)$ and $f(x)$ is $0$ and $\{f(x),f(y)\}\in E(C)$.

\smallskip

{\sl Case 2.}
We have $x\in W_i$ and $y\in W_{i+1}$. Then, by the bandwidth condition 
and size of the sets of linking vertices, 
we must have $y\in L_1^{i+1}$, hence 
$f(y) = B_{\varphi((i+1)-1) + \lceil 1/2\rceil}  = B_{\varphi(i)+1}$, and since 
there are at least $\beta n$ non-linking vertices to the right of $W_i$, the 
vertex $x$ cannot lie in a $L_j^i$, hence $f(x) = A_{\varphi(i)}$, so the 
index difference of $f(y)$ and $f(x)$ is $1$ and $\{f(x),f(y)\}\in E(C)$.

\smallskip

{\sl Case 3.}
We have $y\in W_i$ and $x\in W_{i+1}$. Then, by the bandwidth condition 
and size of the sets of linking vertices, 
we must have $x\in L_1^{i+1}$, hence 
$f(x) = A_{\varphi((i+1)-1) + \lfloor 1/2\rfloor}  = A_{\varphi(i)}$, and since 
there are at least $\beta n$ non-linking vertices to the right of $W_i$, the 
vertex $y$ cannot lie in a $L_j^i$, hence $f(y)=B_{\varphi(i)}$, so the 
index difference of $f(y)$ and $f(x)$ is $0$ and $\{f(x),f(y)\}\in E(C)$. 
This completes the proof that $f$ is a homomorphism. 

\smallskip

We now prove \ref{lem:H:1} and \ref{lem:H:2}. Define $S := \bigcup_{i\in [\ell]} L^i$. Then  
$|S|\leq \ell\cdot 2k \cdot \beta n \leq \ell\cdot 2k \cdot (\xi'/(2\ell k))\cdot n = \xi' n\leq \xi n$, 
which shows \ref{lem:H:1}, and \ref{lem:H:2} is obvious from the 
definitions of $S$ and the map $f$ above. 

We now prove \ref{lem:H:3}. For this it suffices to note, rather crudely, that 
for every $j\in [k]$, no pre-image $f^{-1}(A_j)$ can become larger 
than the sum of the sizes of all sets $W_i$ assigned to $A_j$ by $\varphi$ 
(which by the definition of $f$ equals the sum of all $x_i = |X\cap W_i|$ 
with $\varphi(i)=j$)
plus the total number of linking vertices, i.e. for every $j\in [k]$, using 
the choice of $\beta$ and using that $\varphi$ has the property promised by Lemma \ref{lem:num}, we 
have $|f^{-1}(A_j)|\leq \bigl ( \sum_{i\in \varphi^{-1}(j)} x_i \bigr ) + |\bigcup_{i\in [\ell]} L^i| 
\leq n_j + \xi' n + \ell\cdot |L^i| = n_j + \xi' n + 2k\ell\beta n \leq n_j + 2\xi' n = n_j + \xi n$, 
completing the proof of \ref{lem:H:3}. 
\end{proof}

\section{Concluding remarks}

\noindent
{\bf Unbalanced~$H$ and~$G$.}
Essentially the same proof allows for an analogue of Theorem~\ref{thm:bipbw}
for bipartite graphs~$H$ and~$G$ that are not balanced but whose colour classes
have the same sizes. More precisely, let~$H=(X\dcup Y,F)$ and~$G=(A\dcup
B,E)$ be as in Theorem~\ref{thm:bipbw}, except that $|X|=|A|=n_1$ and
$|Y|=|B|=n_2$ (where $n_1+n_2=2n$) and the minimum degree condition on~$G$
is replaced by the following condition. For all $v\in A$ we have
$\deg_G(v)\ge(\frac12+\gamma)n_2$ and for all $w\in B$ we have
$\deg_G(w)\ge(\frac12+\gamma)n_1$. Then~$H$ is a subgraph of~$G$. 


\smallskip

\noindent
{\bf Generating systems for the cycle space.}
As an application of Theorem~\ref{thm:bipbw}, one can show the following result.
For every $\gamma > 0$ there is $n_0 \in\Nat$ such 
that for every $n\geq n_0$ every balanced bipartite graph $G$ on $2n$ vertices 
with $\delta(G)\ge(\frac12+\gamma)n$ has the property that the edge-sets of
all Hamilton cycles in $G$ form a generating system for the cycle space of $G$.
A proof for this theorem will be given in a forthcoming
paper~\cite{heinighamspace}. It utilises the fact that a special balanced
bipartite graph~$H$ (the so-called M\"obius ladder) of bounded maximum degree
and bandwidth has this property and then shows that this gets translated to the
graph~$G$, using a result of Locke~\cite{locke85}.

\bibliographystyle{amsplain} \bibliography{bipartite}


\end{document}